\title{Rotor Walks and Markov Chains}
\author{Alexander E. Holroyd and James Propp}
\date{28 April 2009 (revised 2 April 2010)}
\newtheorem{thm}{Theorem}
\newtheorem{lemma}[thm]{Lemma}
\newtheorem{prop}[thm]{Proposition}
\newtheorem{cor}[thm]{Corollary}
\newcommand{\prob}{\mathbb{P}}
\newcommand{\expe}{\mathbb{E}}
\renewcommand{\P}{\mathbb{P}}
\newcommand{\E}{\mathbb{E}}
\newcommand{\Z}{\mathbb{Z}}
\newcommand{\N}{\mathbb{N}}
\newcommand{\R}{\mathbb{R}}
\newcommand{\dof}{\bf\boldmath}
\newcommand{\ind}{\mathbf{1}}
\newcommand{\Khitprob}{K_1}
\newcommand{\Khittime}{K_2}
\newcommand{\Kstatvec}{K_3}
\newcommand{\Kstatprob}{K_4}
\newcommand{\Knumvistf}{K_5}
\newcommand{\Kstack}{K_6}
\newcommand{\numvis}{g}
\newcommand{\tmin}{s}
\newcommand{\tmax}{t}
\newcounter{mycount}
\newenvironment{mylist}{\begin{list}{(\roman{mycount})}
{\usecounter{mycount}\itemsep 0pt}}{\end{list}}
\begin{document}
\maketitle

\begin{center}
Dedicated to Oded Schramm, 1961--2008
\end{center}

\begin{abstract}
The rotor walk is a derandomized version of the random walk on
a graph.  On successive visits to any given vertex, the walker
is routed to each of the neighboring vertices in some fixed
cyclic order, rather than to a random sequence of neighbors.
The concept generalizes naturally to countable Markov chains.
Subject to general conditions, we prove that many natural
quantities associated with the rotor walk (including normalized
hitting frequencies, hitting times and occupation frequencies)
concentrate around their expected values for the random walk.
Furthermore, the concentration is stronger than that associated
with repeated runs of the random walk; the discrepancy is at
most $C/n$ after $n$ runs (for an explicit constant $C$),
rather than $c/\sqrt{n}$.
\end{abstract}

\renewcommand{\thefootnote}{}
\footnotetext{{\bf\hspace{-6mm}Key words:} rotor router; quasirandom;
Markov chain; discrepancy}
\footnotetext{{\bf\hspace{-6mm}2000 Mathematics Subject
Classifications:} 82C20; 20K01; 05C25}
\footnotetext{\hspace{-6mm}AEH funded in part by Microsoft Research
and an NSERC Grant.}
\footnotetext{\hspace{-6mm}JP funded in part by an NSF grant.}
\renewcommand{\thefootnote}{\arabic{footnote}}

\section{Introduction} \label{results}

Let $X_0,X_1,\ldots$ be a Markov chain on a countable set
$V$ with transition probabilities $p:V\times V\to[0,1]$
(see e.g.~\cite{norris} for background).
We call the elements of $V$ {\dof vertices}.
We write $\prob_u$ for the law of the Markov chain
started at vertex $u$ (so $\prob_u$-a.s.\ we have $X_0=u$).

The {\dof rotor-router walk} or {\dof rotor walk}
is a deterministic cellular automaton associated with the Markov chain,
defined as follows.
Assume that all transition probabilities $p(u,v)$ are rational
(later we will address relaxation of this assumption) and that for each
$u$ there are only finitely many $v$ such that $p(u,v)>0$.
To each vertex $u$ we associate a positive integer $d(u)$ and
a finite sequence of (not necessarily
distinct) vertices $u^{(1)},\ldots,u^{(d(u))}$,
called the {\dof successors} of $u$, in such a way that
\begin{equation}
\label{freq}
p(u,v)=\frac{\#\{i:u^{(i)}=v\}}{d(u)}\quad\text{for all }u,v\in V.
\end{equation}
(This is clearly possible under the given assumptions;
$d(u)$ may be taken to be the lowest common denominator of
the transition probabilities from $u$.)
The set $V$ together with the quantities $d(u)$
and the assignments of successors
will sometimes be called the {\dof rotor mechanism}.
\begin{figure}
\centering
\resizebox{8cm}{!}{\includegraphics{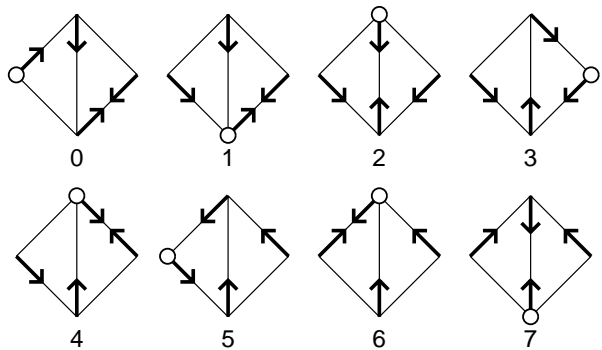}}
\caption{Steps $0,\ldots,7$ of a rotor walk associated with
the simple random walk on a graph with $4$ vertices.
The thin lines represent the edges of graph,
the circle is the particle location,
and the thick arrows are the rotors.
The rotor mechanism in this case is such that
each rotor successively points to the vertex's neighbors
in anticlockwise order.}\label{example}
\end{figure}

A {\dof rotor configuration} is a map $r$ that assigns to each vertex
$v$ an integer $r(v)\in\{1,\ldots, d(v)\}$.  (We think of an arrow or
{\dof rotor} located at each vertex, with the rotor at $v$ pointing to
vertex $v^{(r(v))}$).  We let a rotor configuration evolve in time, in
conjunction with the position of a particle moving from vertex to
vertex: the rotor at the current location $v$ of the particle is
incremented, and the particle then moves in the new rotor direction.
More formally, given a rotor mechanism, an initial particle
location $x_0\in V$ and an initial rotor configuration $r_0$, the
{\dof rotor walk} is a sequence of vertices $x_0,x_1,\ldots\in V$
(called {\dof particle locations}) together with rotor configurations
$r_0,r_1,\ldots$, constructed inductively as follows.  Given $x_t$ and
$r_t$ at time $t$ we set:
\begin{mylist}
\item
$r_{t+1}(v):=
\begin{cases}
(r_t(v)+1) \mod d(v), &v=x_t;\\
r_t(v), &v\neq x_t
\end{cases}$ \\
{\em (increment the rotor at the current particle
location)}; and
\item
$x_{t+1}:=(x_t)^{(r_{t+1}(x_t))}$ \\
{\em (move the particle in the new
rotor direction).}
\end{mylist}
See Figure \ref{example} for a simple illustration of the mechanism.

Given a rotor walk, write
$$n_t(v):=\#\big\{s\in[0,t-1]: x_s=v\big\}$$
for the number of times the particle visits vertex $v$ before
(but not including) time $t$.

We next state general results, Theorems
\ref{hit-prob}--\ref{stat-prob}, relating basic Markov chain
objects to their rotor walk analogues (under suitable
conditions).  We then state a more refined result (Theorem
\ref{hit-prob-log}) for the important special case of simple
random walk on $\Z^2$, followed by extensions to infinite times
(Theorem \ref{hit-prob-tf}) and irrational transition
probabilities (Theorem \ref{stack}).  We postpone discussion of
history and background to the end of the introduction, and
proofs to the later sections.

\subsection{Hitting probabilities}
\label{hitting-probabilities}

Let $T_v:=\min\{t\geq 0:X_t=v\}$ be the first hitting time of
vertex $v$ by the Markov chain (where $\min\emptyset:=\infty$).
Fix two distinct vertices $b,c$ and consider the hitting
probability
\begin{equation}\label{defofh}
h(v)=h_{b,c}(v):=\prob_v(T_b<T_c).
\end{equation}
Note that $h(b)=1$ and $h(c)=0$. In order to connect hitting
probabilities with rotor walks, fix a starting vertex
$a\not\in\{b,c\}$, and modify the transition probabilities from
$b$ and $c$ so that $p(b,a)=p(c,a)=1$. (Thus, after hitting $b$
or $c$, the particle is returned to $a$.) Note that this
modification does not change the function $h$. Modify the rotor
mechanism accordingly by setting $d(b)=d(c)=1$ and
$b^{(1)}=c^{(1)}=a$. Let $x_0,x_1,\ldots$ be a rotor walk
associated with the modified chain. The following is our most
basic result.

\begin{thm}[Hitting probabilities]
\label{hit-prob}
Under the above assumptions, suppose that the quantity
$$\Khitprob:=1+\frac{1}{2}\sum_{\substack{u\in V\setminus\{b,c\},\\ v\in V}}
d(u)p(u,v)|h(u)-h(v)|$$
is finite. Then for any rotor walk and all $t$,
$$\left| h(a)-\frac{n_t(b)}{n_t(b)+n_t(c)} \right|
\leq \frac{\Khitprob}{n_t(b)+n_t(c)}.$$
\end{thm}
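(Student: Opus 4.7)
The plan is to telescope the harmonic function $h$ along the rotor walk and control the resulting discrepancy vertex by vertex, exploiting the rigid cyclic order in which each rotor visits its successors. Specifically, I would start from
\begin{equation*}
h(x_t)-h(a) \;=\; \sum_{s=0}^{t-1}\bigl[h(x_{s+1})-h(x_s)\bigr],
\end{equation*}
note that the left-hand side has modulus at most $1$ since $0\le h\le 1$, and group the terms on the right according to the vertex $u=x_s$.

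For each vertex $u\notin\{b,c\}$, harmonicity together with \eqref{freq} gives $\sum_{i=1}^{d(u)}[h(u^{(i)})-h(u)]=0$. Because the rotor at $u$ cycles strictly through $u^{(1)},\ldots,u^{(d(u))}$, the $n_t(u)$ departures from $u$ decompose into $\lfloor n_t(u)/d(u)\rfloor$ complete cycles (each contributing $0$) together with one partial cycle. The net contribution from $u$ is therefore the sum of $[h(u^{(i)})-h(u)]$ over a cyclic block of indices $i$ whose location depends on $r_0(u)$ but whose length is $n_t(u) \bmod d(u)$. An elementary lemma says that any sub-sum of a zero-sum sequence has modulus at most $\tfrac12\sum_i|a_i|$ (split into positive and negative parts, which are equal since the total is zero); applied to $a_i=h(u^{(i)})-h(u)$ this yields the per-vertex bound
\begin{equation*}
\tfrac12\sum_{i=1}^{d(u)}\bigl|h(u^{(i)})-h(u)\bigr| \;=\; \tfrac12\sum_{v\in V} d(u)\,p(u,v)\,|h(u)-h(v)|.
\end{equation*}

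For $u\in\{b,c\}$ the modified mechanism has $d(u)=1$ with $u^{(1)}=a$, so each visit contributes exactly $h(a)-h(u)$, and the totals from $b$ and $c$ sum to $(n_t(b)+n_t(c))\,h(a)-n_t(b)$. Assembling the contributions and using $|h(x_t)-h(a)|\le 1$ gives
\begin{equation*}
\bigl|(n_t(b)+n_t(c))\,h(a) - n_t(b)\bigr| \;\le\; 1 + \tfrac12\!\!\sum_{\substack{u\in V\setminus\{b,c\}\\ v\in V}}\!\! d(u)\,p(u,v)\,|h(u)-h(v)| \;=\; \Khitprob,
\end{equation*}
and dividing by $n_t(b)+n_t(c)$ delivers the theorem.

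The crux is the partial-sum inequality together with the observation that, whatever the initial rotor position $r_0(u)$ is, the leftover of an incomplete cycle is always a contiguous cyclic block of the fixed zero-sum sequence $(h(u^{(i)})-h(u))_i$, so the $\tfrac12$-total-variation bound applies uniformly in $r_0$ and in $n_t(u)$. The rest is bookkeeping; in particular the ``$1$'' in $\Khitprob$ just absorbs $|h(x_t)-h(a)|\le 1$.
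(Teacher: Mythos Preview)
Your argument is correct and is essentially the paper's proof: the paper packages the telescope as a general ``key bound'' (Proposition~\ref{key}) for an arbitrary $f$ via the potential $\Phi(x,r)=f(x)+\sum_u[\phi(u,r(u))-\phi(u,r_0(u))]$, and then specializes to $f=h$, but the mechanics---grouping departures by vertex, cancellation over full rotor cycles by harmonicity, and bounding the leftover cyclic block by $\tfrac12\sum_i|a_i|$ (Lemma~\ref{cyclic-sum})---are exactly what you wrote. The only cosmetic difference is that the paper absorbs the $b,c$ contributions into $\sum_s\Delta h(x_s)$ rather than handling them as explicit telescope terms.
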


Theorem \ref{hit-prob} implies that
the proportion of times that the rotor walk hits $b$ as opposed to $c$
converges to the Markov chain hitting probability $h(a)$,
provided the rotor walk hits $\{b,c\}$ infinitely often
(we will consider cases where this does not hold
in the later discussion on transfinite rotor walks).
Furthermore, after $n$ visits to $\{b,c\}$,
the discrepancy in this convergence is at most
$K/n$ for a fixed constant $K$.
In contrast, for the proportion of visits by the Markov chain itself,
the discrepancy is asymptotically a random multiple of $1/\sqrt n$
(by the central limit theorem).

The condition $\Khitprob<\infty$ holds in particular whenever
$V$ is finite, as well as in many cases when it is infinite;
for examples see~\cite{propp}.

In the case when the Markov chain (before modification) is a simple
random walk on an undirected graph $G=(V,E)$ (thus, $p(u,v)$ equals
$1/d(u)$ if $(u,v)$ is an edge, and $0$ otherwise, with $d(u)$ being the
degree of $u$), we obtain the particularly simple bound $\Khitprob
\leq 1+ \sum_{(u,v)\in E} |h(u)-h(v)|$.

Theorem \ref{hit-prob}
can be easily adapted to give similar results for
the probability of {\em returning\/} to $b$
before hitting $c$ when started at $a=b$,
and for the probability of hitting
one {\em set\/} of vertices before another.
This can be done either by adapting the proof
or by adding appropriate extra vertices
and then appealing to Theorem \ref{hit-prob}.
For brevity we omit such variations.

We next discuss extensions of Theorem \ref{hit-prob}
in the following directions:
hitting times and stationary distributions,
an example where $\Khitprob=\infty$,
cases where the particle can escape to infinity, and
irrational transition probabilities.

\pagebreak
\subsection{Hitting times} \label{hitting-times}

Fix a vertex $b$ and let
\begin{equation}\label{def-of-k}
k(v)=k_b(v):=\expe_v T_b
\end{equation}
be its expected hitting time.
Fix also an initial vertex $a\neq b$
and modify the transition probabilities from $b$ so that $p(b,a)=1$.
(Then $k(a)$ is also the expected return time from $b$ to $b$
in the reduced chain in which the vertices $a$ and $b$ are conflated.)
Let $x_0,x_1,\ldots$ be a rotor walk
associated with the modified chain.

\begin{thm}[Hitting times]
\label{hit-time} Under the above assumptions, suppose that $V$ is finite,
and let $$\Khittime:=
\max_{v\in V} k(v)+\frac{1}{2}\sum_{\substack{u\in V\setminus\{b\},\\v\in V}}
d(u)p(u,v)|k(u)-k(v)-1|.$$
Then for any rotor walk and all $t$,
$$\left|(k(a)+1)-\frac{t}{n_t(b)} \right|
\leq \frac{\Khittime}{n_t(b)}.$$
\end{thm}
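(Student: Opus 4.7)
The plan is to run a discrepancy argument analogous to the one behind Theorem \ref{hit-prob}, with the hitting-time function $k$ in place of the hitting-probability function $h$. The key algebraic input is the Markov identity $k(v) = 1 + \sum_w p(v,w)k(w)$ for $v\ne b$, equivalently
\[
\sum_{i=1}^{d(v)}\bigl[k(v^{(i)}) - k(v) + 1\bigr] = 0 \qquad (v\ne b),
\]
together with $k(b)=0$.

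First I would telescope: $k(x_t) - k(a) = \sum_{s=0}^{t-1}[k(x_{s+1}) - k(x_s)]$, splitting the sum by whether $x_s = b$. The $n_t(b)$ terms with $x_s = b$ each contribute $k(a) - k(b) = k(a)$, since the modification sends $b\to a$. For each $v\ne b$, I group the $n_t(v)$ visits to $v$: the rotor at $v$ makes $n_t(v)$ consecutive clicks, producing $q_v := \lfloor n_t(v)/d(v)\rfloor$ complete cycles plus a partial cycle of length $r_v := n_t(v) - q_v d(v)$. The Markov identity makes each full cycle contribute $-d(v)$ to the sum, giving $-(n_t(v) - r_v)$ from the full cycles and leaving a partial-cycle discrepancy
\[
\phi_v \;:=\; \sum_{\text{partial}}\bigl[k(v^{(i)}) - k(v) + 1\bigr].
\]
Using $\sum_{v\ne b} n_t(v) = t - n_t(b)$, the telescoping collapses to
\[
k(x_t) - k(a) \;=\; n_t(b)(k(a)+1) - t + \sum_{v\ne b}\phi_v,
\]
so dividing by $n_t(b)$ yields the error identity
\[
(k(a)+1) - \frac{t}{n_t(b)} \;=\; \frac{k(x_t) - k(a) - \sum_{v\ne b}\phi_v}{n_t(b)}.
\]

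It then remains to bound the numerator by $K_2$. Since $V$ is finite and $k\ge 0$, both $k(x_t)$ and $k(a)$ lie in $[0,\max_v k(v)]$, so $|k(x_t) - k(a)| \le \max_v k(v)$. For each $\phi_v$, the summands $k(v^{(i)}) - k(v) + 1$ sum to $0$ over a full cycle, so the standard fact that any contiguous cyclic partial sum of a zero-sum sequence has magnitude at most half the total variation gives
\[
|\phi_v| \;\le\; \tfrac12\sum_{i=1}^{d(v)}\bigl|k(v^{(i)}) - k(v) + 1\bigr| \;=\; \tfrac12\sum_{w}d(v)p(v,w)\,|k(v) - k(w) - 1|.
\]
Summing over $v\ne b$ recovers the second term of $K_2$ exactly, completing the bound. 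The only real thing to be careful about is the bookkeeping tying rotor clicks at $v$ to destination counts and partial-cycle positions (and in particular verifying that the ``partial'' segment is genuinely a contiguous arc of the cyclic sequence, so that the half-variation inequality applies); once that is arranged, the $\tfrac12$ factor comes out automatically and the theorem follows.
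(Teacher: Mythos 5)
Your proof is correct and follows essentially the same route as the paper's: it is the specialization $f=k_b$ of the paper's Proposition~\ref{key} (Key bound), with the potential-function telescoping and the cyclic half-variation estimate (Lemma~\ref{cyclic-sum}) unrolled directly in this case rather than quoted from the general proposition. Your $\phi_v$ are, up to sign, exactly the paper's quantities $\phi(u,r_t(u))-\phi(u,r_0(u))$, and the bound $|k(x_t)-k(a)|\le\max_v k(v)$ together with the vanishing of the $u=b$ contribution reproduces $\Khittime$ exactly.
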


Thus the average time for the rotor walk to get from $a$ to $b$
concentrates around the expected hitting time.
The ``$+1$" term corresponds to the time step to move from $b$ to $a$.

Note that, in contrast with Theorem \ref{hit-prob}, in the above result
we require $V$ to be finite.  Leaving aside some
degenerate cases, such a bound cannot hold when $V$ is infinite.
Indeed, if $V$ is infinite and the Markov chain is irreducible,
then $|(k(a)+1)n_t(b) - t|$ is unbounded in $t$,
since the rotor walk has arbitrarily long excursions
between successive visits to $b$; hence the conclusion of
Theorem \ref{hit-time} cannot hold (for any constant $K_2$) in this case.
In contrast, in the next result we again allow $V$ to be infinite.

\subsection{Stationary vectors} \label{stationary-vectors}

Suppose that the Markov chain is irreducible and recurrent,
and let $\pi:V\to(0,\infty)$ be a stationary vector
(so that $\pi p=\pi$ as a matrix product).
Let $x_0,x_1,\ldots$ be an associated rotor walk.
Fix two vertices $b\neq c$
and let $h=h_{b,c}$ be as in \eqref{defofh} above.
Also let $T^+_u:=\min\{t\geq 1: X_t=u\}$
denote the first return time to $u$,
and define the escape probability $e_{u,v}:=\P_u(T_v<T^+_u)$.

\begin{thm}[Occupation frequencies]
\label{stat-vec} \sloppy For any irreducible, recurrent Markov
chain, with the above notation, suppose that the quantity
$$\Kstatvec:=1+\frac12\Big(d(b)+d(c)+\sum_{u,v\in V}
d(u)p(u,v)|h(u)-h(v)|\Big)$$ is finite.
Then for all $t$,
$$\Big|\frac{n_t(b)}{\pi(b)}-\frac{n_t(c)}{\pi(c)}\Big|
\leq \frac{\Kstatvec}{\pi(b)e_{b,c}}.$$
\end{thm}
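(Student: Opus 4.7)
My approach will be to adapt the potential-function strategy underlying Theorem~\ref{hit-prob}, applied with the hitting-probability function $h=h_{b,c}$ but now to the \emph{un}modified chain (so that $h$ is harmonic off $\{b,c\}$ but not at $b$ or $c$); the escape-probability factor $e_{b,c}$ will then emerge naturally from a one-step analysis at $b$ and $c$, and the stationary measure $\pi$ will enter through a flow identity at the very end.

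First, for each vertex $u$ I would introduce the rotor offset
$$\phi_u(k):=\sum_{i=1}^{k}\bigl(h(u^{(i)})-\bar h(u)\bigr),\qquad \bar h(u):=\sum_{v}p(u,v)h(v),$$
which vanishes at both $k=0$ and $k=d(u)$ (by \eqref{freq}) and thus satisfies $|\phi_u(k)|\le\tfrac12\sum_{i=1}^{d(u)}|h(u^{(i)})-\bar h(u)|$. For $u\notin\{b,c\}$, harmonicity gives $\bar h(u)=h(u)$ and this bound reduces to $\tfrac12\sum_v d(u)p(u,v)|h(u)-h(v)|$; for $u\in\{b,c\}$, I would use only $0\le h\le 1$ to get $|\phi_u(k)|\le d(u)/2$. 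These are precisely the ingredients summed in $K_3$.

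Next, I would form the global potential $\Phi_t:=\sum_u\phi_u(r_t(u))-h(x_t)$. A direct check of the rotor update rule shows the one-step identity $\Phi_{t+1}-\Phi_t=h(x_t)-\bar h(x_t)$. Since $h-\bar h$ vanishes off $\{b,c\}$, telescoping yields
$$\Phi_t-\Phi_0=n_t(b)\bigl(h(b)-\bar h(b)\bigr)+n_t(c)\bigl(h(c)-\bar h(c)\bigr).$$
A first-step analysis evaluates $h(b)-\bar h(b)=\sum_v p(b,v)(1-h(v))=\P_b(T_c<T_b^+)=e_{b,c}$, and similarly $\bar h(c)-h(c)=e_{c,b}$. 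Combined with a uniform bound on $|\Phi|$ (and hence on $|\Phi_t-\Phi_0|$) controlled by $K_3$, this gives $|n_t(b)\,e_{b,c}-n_t(c)\,e_{c,b}|\le K_3$.

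The last step invokes the flow identity $\pi(b)e_{b,c}=\pi(c)e_{c,b}$, valid for any irreducible recurrent chain (both sides represent the rate, under the stationary measure, at which excursions cross from one distinguished vertex to the other, and this is provable by excursion decomposition without a reversibility hypothesis). Substituting $e_{c,b}=\pi(b)e_{b,c}/\pi(c)$ into the previous display and dividing through by $\pi(b)e_{b,c}$ produces the claimed inequality. I expect the main obstacle to be the uniform bound in Step~2: one must arrange the additive shifts in $\phi_u$ (and track the contribution from $h(x_t)\in[0,1]$) so that the constant in $|\Phi_t-\Phi_0|$ is exactly $K_3$ rather than a harmless multiple of it, and one must verify that the one-step identity is correct at the cyclic wrap $r_t(u)=d(u)\mapsto 1$. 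Once this bookkeeping is in place, the flow identity and the handling of the (possibly infinite) recurrent state space are routine.
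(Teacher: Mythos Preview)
Your approach is essentially identical to the paper's: apply the potential argument of Proposition~\ref{key} with $f=h_{b,c}$ on the unmodified chain, compute $\Delta h(b)=-e_{b,c}$ and $\Delta h(c)=e_{c,b}$ (the paper packages this as Lemma~\ref{escape}), bound the rotor-offset sum via the zero-sum/cyclic inequality, and finish with the flow identity $\pi(b)e_{b,c}=\pi(c)e_{c,b}$. The one technicality you should address is that $\Phi_t=\sum_u\phi_u(r_t(u))-h(x_t)$ need not converge when $V$ is infinite; the paper handles this by building the shift $-\phi_u(r_0(u))$ into the definition of $\Phi$ so that only visited vertices contribute nonzero terms, after which the bookkeeping you flagged yields exactly $K_3$ (indeed your crude bound $d(u)/2$ at $u\in\{b,c\}$ is even slightly sharper than the paper's, which retains the $|h(u)-h(v)|$ terms there as well).
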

Thus the ratio of times spent at different vertices by the
rotor walk concentrates around the ratio of corresponding
components of the stationary vector.

Now suppose that the Markov chain is irreducible and positive recurrent,
and let $\pi$ be the stationary {\em distribution}
(so that $\sum_{v\in V} \pi(v)=1$).
Fix a vertex $b$ and let $k=k_b$ be as in \eqref{def-of-k}.
The following result
states that the proportion of time spent by the rotor walk at $b$
concentrates around $\pi(b)$.

\begin{thm}[Stationary distribution]
\label{stat-prob}
For an irreducible positive recurrent Markov chain
with $V$ finite, with the above notation, let
$$\Kstatprob:=\max_{v\in V} k(v)+\frac{1}{2}
\Big( \frac{d(b)}{\pi(b)} +
\sum_{u,v\in V} d(u)p(u,v)|k(u)-k(v)-1|\Big).$$
Then for all $t$,
$$\Big|\pi(b)-\frac{n_t(b)}{t}\Big| \leq \frac{\Kstatprob\pi(b)}{t}.$$
\end{thm}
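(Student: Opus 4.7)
The plan is to construct a potential $\Phi_t$ on rotor configurations so that $\Phi_t - \Phi_0 = n_t(b)/\pi(b) - t$, and then to bound $|\Phi_t - \Phi_0| \leq \Kstatprob$; dividing by $t/\pi(b)$ then yields the theorem. The construction mirrors the one underlying Theorem~\ref{hit-time}, but the Markov chain here is not modified at $b$, so the hitting-time function $k$ fails the harmonic relation at $b$ and this defect must be absorbed into the potential.

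For each vertex $u$, I would define a ``rotor potential'' $H^*_u(n)$ for $n=0,1,2,\ldots$ by $H^*_u(0)=0$ and
$$H^*_u(n)-H^*_u(n-1)=k(u)-k(u^{(i_n)})-1,$$
where $i_n\in\{1,\ldots,d(u)\}$ is the direction to which the rotor at $u$ points after its $n$-th increment (so $i_1,i_2,\ldots$ is a cyclic enumeration of $\{1,\ldots,d(u)\}$). For $u\neq b$, the harmonic identity $k(u)=1+\sum_v p(u,v)k(v)$ makes the sum of increments over one period vanish, so $H^*_u$ has period $d(u)$. For $u=b$, however, $k(b)=0$ while $1+\sum_v p(b,v)k(v)=\E_b T_b^+=1/\pi(b)$, so the period sum equals $-d(b)/\pi(b)$. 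To restore periodicity at $b$, I would set $\tilde H^*_b(n):=H^*_b(n)+n/\pi(b)$, and then define
$$\Phi_t:=k(x_t)+\sum_{u\neq b}H^*_u(n_t(u))+\tilde H^*_b(n_t(b)).$$

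A direct one-step calculation then shows $\Phi_{t+1}-\Phi_t=-1+\ind[x_t=b]/\pi(b)$: for $x_t\neq b$ the change in $k$ and the rotor-potential increment telescope to $-1$, and for $x_t=b$ the extra $1/\pi(b)$ comes from the linear shift built into $\tilde H^*_b$. Telescoping yields $\Phi_t-\Phi_0=n_t(b)/\pi(b)-t$, so it remains to bound $|\Phi_t-\Phi_0|$. The term $|k(x_t)-k(x_0)|$ is at most $\max_v k(v)$. For each $u\neq b$, the elementary fact that the partial sums of a zero-sum cyclic sequence starting at $0$ lie within half its total variation gives $|H^*_u(n_t(u))|\leq\tfrac12\sum_v d(u)p(u,v)|k(u)-k(v)-1|$. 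The same estimate applied to $\tilde H^*_b$, with the triangle inequality used to peel off the $1/\pi(b)$ shift in each of its $d(b)$ increments, gives $|\tilde H^*_b(n_t(b))|\leq\tfrac12\sum_v d(b)p(b,v)|k(b)-k(v)-1|+d(b)/(2\pi(b))$. Summing these contributions reproduces $\Kstatprob$ on the nose.

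The main delicacy will be the treatment of $b$: because $k(b)=0$ does not satisfy the harmonic equation, the raw rotor potential at $b$ drifts by $-d(b)/\pi(b)$ per period, and the linear-shift correction needed to repair this is exactly what produces the extra $d(b)/(2\pi(b))$ term in $\Kstatprob$. Apart from this adjustment at $b$, the argument is a routine telescoping that parallels the proof of Theorem~\ref{hit-time}.
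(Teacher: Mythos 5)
Your proposal is correct and follows essentially the same route as the paper: the paper applies its Proposition~\ref{key} (the key potential bound) with $f=k_b$, using $\Delta k(u)=-1$ for $u\neq b$ and $\Delta k(b)=1/\pi(b)-1$ to telescope to $n_t(b)/\pi(b)-t$, and then peels off the $1/\pi(b)$ term at $u=b$ by the triangle inequality to recover $\Kstatprob$. You re-derive the underlying potential $\Phi$ by hand (rather than invoking the proposition as a black box), and your $\tilde H^*_b$ correction is exactly the role played by $\Delta k(b)=1/\pi(b)-1$ inside $\phi(b,\cdot)$ in the paper's proof, so the two arguments coincide step for step.
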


\subsection{Logarithmic discrepancy for walks on $\Z^2$}  \label{walks-on-Z2}

While Theorem \ref{hit-prob} requires the quantity $\Khitprob$ to be finite,
experiments suggest that similar conclusions hold in many cases where it is
infinite. We next treat one interesting example in which such a conclusion
provably holds, but with an additional logarithmic factor in the bound on the
discrepancy.  (Additional such examples will appear in \cite{propp}.)

Consider simple symmetric random walk on the square lattice
$\Z^2$. That is, let $V=\Z^2$, and let $p(u,v):=1/4$ for all
$u,v\in V$ with $\|u-v\|_1=1$ and $p(u,v):=0$ otherwise. Let
each rotor rotate anticlockwise; that is for each $u\in V$, we
set $d(u):=4$ and
\begin{equation}\label{z2-mech}
u^{(i)}:=u+(\cos \tfrac{i\pi}2,\sin \tfrac{i\pi}2),\quad i=1,\ldots ,4.
\end{equation}
Consider the particular initial rotor configuration $r$ given by
\begin{equation}
r\big((x,y)\big):=\Big\lfloor \tfrac12+
\tfrac{2}{\pi} \mbox{arg}(x-\tfrac12,y-\tfrac12)\Big\rfloor \mod 4
\label{z2-r}
\end{equation}
(where $\arg(x,y)$ denotes the angle $\theta\in[0,2\pi)$
such that $(x,y)=r(\cos \theta,\sin\theta)$ with $r>0$).
See Figure \ref{swastika}.
\begin{figure}
\centering
\resizebox{5cm}{!}{\includegraphics{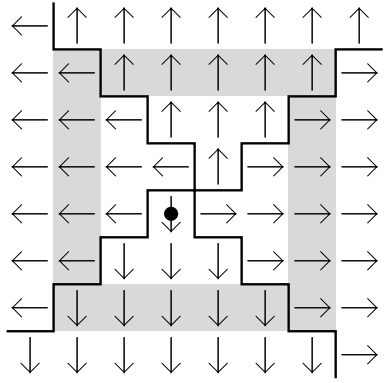}}
\caption{The initial rotor configuration in Theorem \ref{hit-prob-log}.
The dot shows the location of $(0,0)$.
The third layer is shaded (see the later proofs).}\label{swastika}
\end{figure}

Fix vertices $a,b,c$ of $\Z^2$ with $b\neq c$
and modify $p$ by setting $p(b,a)=p(c,a)=1$.
If $a=b$ then also split this vertex into two vertices $a$ and $b$,
let $b$ inherit all the incoming transition probabilities of the
original random walk, and let $a$ inherit the outgoing probabilities;
similarly if $a=c$.  Also modify the rotor mechanism and the rotor
configuration $r$ accordingly.

\begin{thm}[Hitting probabilities for walk on $\Z^2$]\label{hit-prob-log}
Let $a,b,c$ be vertices of $\Z^2$ with $b\neq c$,
and consider the rotor walk associated with the random walk,
rotor mechanism and initial rotor configuration described above,
started at vertex $a$.  Then for any $t$, with
$h(a) = h_{b,c}(a)$ and $n=n_t(b)+n_t(c)$,
$$\left| h(a)-\frac{n_t(b)}{n}
\right|\leq \frac{C \ln n}{n}.$$
Furthermore, $t\leq C' n^3$.
Here $C,C'$ are finite constants depending on $a,b,c$.
\end{thm}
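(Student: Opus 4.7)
The plan is to replay the proof of Theorem~\ref{hit-prob} on a growing truncation of $\Z^2$ and then combine a logarithmic bound on the truncated $\Khitprob$ with a polynomial escape-radius bound for the rotor walk. The main obstacle is precisely that $\Khitprob=\infty$ on $\Z^2$: for simple random walk, the discrete potential kernel satisfies $a(u)=(2/\pi)\log|u|+O(1)$, and the standard two-point hitting-probability formula
\[
h_{b,c}(u)\;=\;\frac{a(u-c)-a(u-b)+a(b-c)}{2\,a(b-c)}
\]
yields $h(u)=\tfrac12+O(1/|u|)$ and $|h(u)-h(v)|=O(1/|u|^2)$ along every edge $(u,v)$ with $|u|$ large. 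Summing this estimate over edges with $u$ in a disk $B_R$ of radius $R$ around the origin gives $O(\log R)$. Hence the natural truncated analog $\Khitprob^{(R)}$ of $\Khitprob$ grows only logarithmically in $R$, and the desired $(\log n)/n$ discrepancy will emerge once we can take $R=\mathrm{poly}(n)$.

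First, I would rerun the divergence/telescoping identity behind Theorem~\ref{hit-prob} on the finite chain obtained from $\Z^2$ by collapsing the exterior of $B_R$ to a single absorbing vertex, picking up a boundary error $\mathcal B_R$ measuring (i) any rotor crossings of $\partial B_R$ up to time $t$, and (ii) the initial-configuration mismatch on $\partial B_R$. The particular initial rotors \eqref{z2-r} are designed so that (ii) is $O(1)$ uniformly in $R$: they form a discrete approximation to the $1$-form $d\arg$, whose curl vanishes off the origin, so the boundary-circle sum measuring the mismatch between the initial rotors and $h$ telescopes rather than growing with $R$. If in addition the walker stays strictly inside $B_R$ up to time $t$, then (i) vanishes.

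Second, I would establish a deterministic confinement estimate: with the swastika initial rotors, after $n$ visits to $\{b,c\}$ the walker has not left a disk of radius $R=O(n^{3/2})$, and the total time satisfies $t=O(n^3)$. I expect this to follow from a layer-by-layer analysis, the layers being the nested annuli around the origin (the third of these is shaded in Figure~\ref{swastika}). Schematically, the swastika rotors at radius $r$ collectively encode $\Theta(r^2)$ ``turns worth'' of incoming current, so a single excursion from $a$ back to $\{b,c\}$ can only advance the walker's frontier by one new layer, while the $k$-th layer costs $O(k^2)$ steps. Summing over $k=1,\ldots,n$ gives $t=O(n^3)$ and $R=O(n^{3/2})$, so $\log R=O(\log n)$; substituting into the previous step yields the desired bound $|h(a)-n_t(b)/n|\leq C(\log n)/n$.

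The hard part will be this confinement estimate together with verifying that $\mathcal B_R$ really is bounded uniformly in $R$. Unlike Theorem~\ref{hit-prob}, where a single divergence identity closes the argument at once, here one must exploit the specific harmonic structure of the swastika to rule out fast excursions out of $B_R$, in the spirit of the rotor-aggregation estimates of Levine and Peres for the abelian sandpile: the initial rotors must be shown to ``mimic'' the radially symmetric harmonic flow from the origin accurately enough that the odometer at the frontier cannot outrun its random-walk counterpart. Once these quantitative comparisons are in place, the logarithmic factor in the final bound is inevitable and natural: it reflects the logarithmic divergence of the Dirichlet energy of $h$ on the infinite lattice $\Z^2$.
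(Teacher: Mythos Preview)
Your high-level plan matches the paper's: restrict the identity behind Proposition~\ref{key} to a box containing all visited vertices, bound $\sum_{(u,v)}|h(u)-h(v)|$ over that box by $O(\log R)$ via the potential-kernel asymptotics (this is exactly Lemma~\ref{log-bound}), and control $R$ polynomially in $n$. But the boundary-error framework is an unnecessary detour. You do not need to truncate the chain or control any $\mathcal{B}_R$: in the identity \eqref{almost-done} the term $\phi(u,r_t(u))-\phi(u,r_0(u))$ vanishes at every unvisited vertex, so the sum is automatically supported on visited vertices. The $d\arg$ reading of the swastika is suggestive but plays no role in bounding a boundary term; the paper never introduces one.

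The genuine gap is the confinement argument. Your heuristic that ``rotors at radius $r$ encode $\Theta(r^2)$ turns of incoming current'' is not correct (layer $k$ has only $O(k)$ vertices) and does not by itself yield the one-layer-per-excursion conclusion; moreover your stated radius $R=O(n^{3/2})$ is inconsistent with ``one layer per excursion'', which already gives $R=O(n)$. The actual mechanism (Lemma~\ref{layers}) rests on a very specific feature of the configuration \eqref{z2-r}: at every non-corner vertex $u\in\partial B(k)$, the \emph{initial} rotor direction is the outward one, into $\partial B(k+1)$. Hence the first time $u$ emits the particle outward, its rotor has completed a full $4$-cycle and returned to its initial state, so $u$ has emitted (and therefore received) the particle $4$ times---but never from a neighbor in $\partial B(k+1)$. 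By pigeonhole some inner neighbor $v\notin\{b,c\}$ has sent the particle to $u$ twice, forcing $v$ to have emitted at least $5$ times. Repeating this produces a vertex that emitted $5$ times before any other did, and tracing one step further back yields a contradiction unless $a$ was visited in between. This gives $R=O(n)$, and the same pigeonhole shows each vertex is visited at most $4$ times per excursion, whence $t\leq 4n\cdot|B(n+C_1)|=O(n^3)$. This local combinatorial argument, not a global flux/odometer comparison, is what makes the swastika work.
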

In contrast to the above result for the rotor walk, for the
Markov chain itself, after $n$ visits to $\{b,c\}$ the
proportion of visits to $b$ differs from its limit $h(a)$ by
$K/\sqrt{n}$ in expected absolute value (by the central limit
theorem), while the median number of time steps needed to
achieve $n$ visits is at least $(K')^n$ where $K>0$ and $K'>1$
are constants depending on $a,b,c$. (The latter fact is an easy
consequence of the standard fact \cite{spitzer} that the
expected number of visits to the origin of $\Z^2$ after $t$
steps of random walk is $O(\ln t)$ as $t \rightarrow \infty$.)

Simulations suggest that a much tighter bound on the
discrepancy should actually hold in the situation of Theorem
\ref{hit-prob-log}, and in fact the results seem consistent
with a bound of the form $\mbox{const}/n$.  The rotor
configurations at large times are very interesting; see Figure
\ref{sim}. (Also compare with Figure \ref{sim-tf} below).
Further discussion of these issues will appear in \cite{propp}.
\begin{figure}
\centering
\resizebox{10cm}{!}{\includegraphics{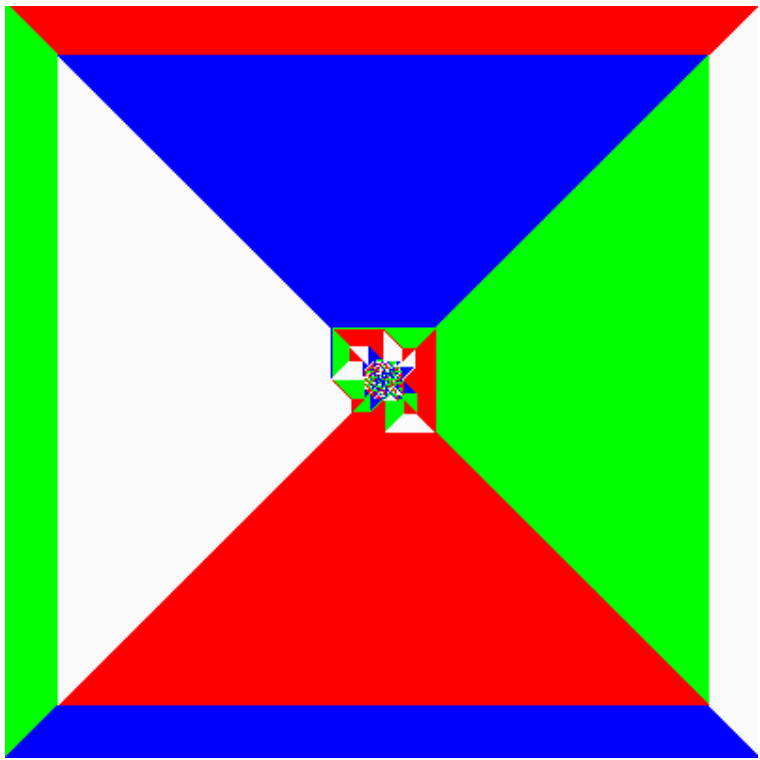}}
\caption{
The rotor configuration after 500 visits to $b$,
starting in the configuration of Figure \ref{swastika} with $a=c=(0,0)$ and $b=(1,1)$.
The rotor directions are: East=white, North=red, West=green, South=blue.
}\label{sim}
\end{figure}

\subsection{Transfinite walks} \label{transfinite-walks}

As mentioned above, Theorem \ref{hit-prob} implies
convergence of $n_t(b)/(n_t(b)+n_t(c))$
to $h(a)$ only if $n_t(b)+n_t(c)\to\infty$ as $t\to\infty$;
we now investigate when this holds,
and what can be done if it does not.
We say that a rotor walk is {\dof recurrent}
if it visits every vertex infinitely often,
and {\dof transient} if it visits every vertex only finitely often.

\begin{lemma}[Recurrence and transience]
\label{rec-trans} Any rotor walk associated with
an irreducible Markov chain is either recurrent or transient.
\end{lemma}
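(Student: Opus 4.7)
The plan is to show that the set
$R := \{v \in V : v \text{ is visited infinitely often by the rotor walk}\}$
is either empty or all of $V$; this is exactly the claimed dichotomy.

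First I would establish that $R$ is closed under out-neighbors in the Markov chain graph. Suppose $v \in R$. By the update rule~(i) in the definition of the rotor walk, every time the particle is at $v$ the rotor $r_t(v)$ is incremented mod $d(v)$; since this happens infinitely often, $r_t(v)$ takes each value in $\{1,\ldots,d(v)\}$ infinitely often, and by~(ii) the particle then moves from $v$ to the corresponding successor. Hence each successor $v^{(i)}$ is visited infinitely often. By~\eqref{freq}, the multiset $\{v^{(1)},\ldots,v^{(d(v))}\}$ contains every vertex $w$ with $p(v,w) > 0$, so any such $w$ lies in $R$.

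Second, I would iterate this and invoke irreducibility. If $v \in R$ and $w$ is reachable from $v$ in the Markov chain (i.e.\ there exists a sequence $v = u_0, u_1, \ldots, u_m = w$ with $p(u_{i-1}, u_i) > 0$ for all $i$), then a straightforward induction using the first step gives $w \in R$. Irreducibility means that every vertex is reachable from every other vertex, so $R \neq \emptyset$ implies $R = V$, i.e.\ the walk is recurrent. Otherwise $R = \emptyset$, and every vertex is visited only finitely often, i.e.\ the walk is transient.

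There is no real obstacle here; the only point requiring a moment of care is to confirm that successive visits to $v$ really do cause the rotor at $v$ to take every value in $\{1,\ldots,d(v)\}$ infinitely often (a pigeonhole/cyclic-increment remark) and that the assignment of successors in~\eqref{freq} faithfully encodes the set $\{w : p(v,w) > 0\}$. Both are immediate from the definitions.
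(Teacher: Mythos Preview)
Your proof is correct and is essentially identical to the paper's: both show that if $u$ is visited infinitely often and $p(u,v)>0$ then $v$ is visited infinitely often (since $v=u^{(i)}$ for some $i$ and the rotor at $u$ cycles), and then invoke irreducibility to conclude. You have simply written out the details a bit more explicitly.
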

Note in particular that if $V$ is finite and $p$ is irreducible
then every rotor walk is recurrent.

Fix an initial rotor configuration $r_0$ and an initial vertex $x_0=a$.
Suppose that the rotor walk $x_0,x_1,\ldots$ is transient.
Then we can define a rotor configuration $r_\omega$ by
$r_\omega(v):=\lim_{t\to\infty} r_t(v)$
(the limit exists since the sequence $r_t(v)$ is eventually constant).
Now {\bf restart} the particle at $a$ by setting $x_\omega:=a$,
and define a rotor walk $x_\omega,x_{\omega+1},x_{\omega+2},\ldots$
according to the usual rules.
If this is again transient we can set
$r_{2\omega}:=\lim_{t\to\infty} r_{\omega+t}$
and restart at $x_{2\omega}:=a$ and so on.
Continue in this way up to the first $m$
for which the walk $x_{m\omega},x_{m\omega+1},\ldots$ is recurrent,
or indefinitely if it is transient for all $m$.
Call this sequence of walks a {\dof transfinite rotor walk}
started at $a$.

A {\dof transfinite time} is a quantity of the form $\tau=\omega^2$,
or $\tau=m\omega+t$ where $m,t$ are non-negative integers.
There is a natural order on transfinite times given by
$m\omega+t<m'\omega+t'$
if and only if either $m<m'$ or both $m=m'$ and $t<t'$,
while $m\omega+t<\omega^2$
for all $m$ and $t$.
For a transfinite walk and a transfinite time $\tau$ we write
$n_\tau(v)=\#\{\alpha<\tau: x_\alpha=v\}$
for the number of visits to $v$ before time $\tau$.
We sometimes say that the walk {\dof goes to infinity}
just before each of the times $\omega,2\omega,\ldots$
at which it is defined.

\begin{lemma}[Transfinite recurrence and transience]
\label{trans-rec-trans}
For an irreducible Markov chain and a transfinite rotor walk
started at $a$, for any transfinite time $\tau$,
either $n_\tau(v)$ is finite for all $v$
or $n_\tau(v)$ is infinite for all $v$.
Also there exists $M\in\{1,2,\ldots,\omega\}$ such that
$n_{M\omega}(v)$ is infinite for all $v$
and the rotor walk is defined at all $\tau<M\omega$.
\end{lemma}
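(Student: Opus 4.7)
The strategy is to iterate Lemma \ref{rec-trans} along the $\omega$-segments of the transfinite walk, and then handle the case where every segment is transient using the rotor mechanism together with irreducibility of the chain.

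I would first observe that for each $m$ at which it is defined, the segment $x_{m\omega}, x_{m\omega+1}, \ldots$ is an ordinary rotor walk for the irreducible chain, started at $a$ with initial configuration $r_{m\omega}$. By Lemma \ref{rec-trans} each such segment is either recurrent or transient. Let $m^*$ be the least index for which the segment is recurrent, or $+\infty$ if there is no such index; set $M := m^*+1$ in the first case and $M := \omega$ in the second, so $M \in \{1,\ldots,\omega\}$. By the construction of the transfinite walk, the segments prior to index $m^*$ are all transient, so the limit rotor configurations $r_{m\omega}$ exist for every $m < M$, and the transfinite walk is defined at exactly the times $\tau < M\omega$.

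If $M$ is finite, then segment $m^* = M-1$ is recurrent by definition and so visits every vertex infinitely often, yielding $n_{M\omega}(v) = \infty$ for all $v$. For the dichotomy at any earlier $\tau = m\omega + t < M\omega$, the finitely many completed transient segments contribute only finitely many visits to each $v$, and the partial segment $m$ contributes at most $t$ more, so $n_\tau(v)$ is finite for every $v$.

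The case $M = \omega$ is the main obstacle: each transient segment contributes only finitely many visits to each vertex, yet I must show that summing over infinitely many segments gives infinitely many visits at every vertex. Let $F := \{v \in V : n_{\omega^2}(v) < \infty\}$; the key claim is that the complement $V \setminus F$ is forward-closed under $p$. Indeed, if $u \notin F$ and $p(u,v) > 0$, then $u$ is visited infinitely often in total, so the rotor at $u$ is advanced infinitely often through its cyclic list $u^{(1)}, \ldots, u^{(d(u))}$; since $v$ appears at least once in this list (because $\#\{i : u^{(i)} = v\} = d(u)p(u,v) \geq 1$), the walk departs $u$ for $v$ infinitely often, so $n_{\omega^2}(v) = \infty$. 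The particle is reset to $a$ at the start of each of the infinitely many segments, so $a \notin F$; propagating along any $p$-path from $a$ to a given $v$ (which exists by irreducibility) gives $V \setminus F = V$, i.e.\ $n_{\omega^2}(v) = \infty$ for every $v$. The dichotomy at finite-$m$ times $\tau = m\omega + t < \omega^2$ is handled exactly as before, and this completes both assertions of the lemma.
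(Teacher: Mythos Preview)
Your proof is correct and follows essentially the same approach as the paper's: both define $M$ in the same way, and both use the rotor-cycling argument (if $u$ is visited infinitely often and $p(u,v)>0$ then so is $v$) together with irreducibility to propagate from the infinitely-often-visited vertex $a$ to every vertex. The paper is simply more terse, proving the dichotomy first as a general fact about any $\tau$ and then applying it to $\tau=M\omega$, whereas you organize the argument by case analysis on $M$ and on whether $\tau<M\omega$ or $\tau=M\omega$.
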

Note that while it is not obvious how to use a finite computer running in a
finite time to compute transfinite rotor walks in general, it is at least
possible in certain settings, such as a random walk on the integers with a
periodic initial rotor configuration.

\begin{thm}[Transfinite walks and hitting probabilities]
\label{hit-prob-tf} Under the assumptions of Theorem \ref{hit-prob},
suppose further that $p$ is irreducible,
and that $$\limsup_{v\in V} h(v)=0.$$
Then for any transfinite time $\tau=m\omega+t$
at which all vertices have been visited only finitely often,
$$\left| h(a)-\frac{n_\tau(b)}{n_\tau(b)+n_\tau(c)+m}
\right|\leq \frac{\Khitprob}{n_\tau(b)+n_\tau(c)+m}.$$
\end{thm}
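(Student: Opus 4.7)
The plan is to run the potential-function argument behind Theorem \ref{hit-prob} epoch by epoch across the transfinite walk. The key observation is that in each transient epoch the walker escapes through vertices with $h$ tending to $0$ (by the $\limsup$ hypothesis), so each such escape contributes a clean extra $h(a)$ to the count-side of a master identity, producing exactly the ``$+m$'' in the denominator.

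For the single-walk identity, set $\bar h(u) := \sum_v p(u,v)h(v)$ (equal to $h(u)$ for $u\notin\{b,c\}$ and equal to $h(a)$ at $b,c$) and define $\phi(u,j) := \sum_{i=1}^j \bigl(h(u^{(i)})-\bar h(u)\bigr)$. Since the partial sums start and end at $0$, the range of $\phi(u,\cdot)$ is at most half its total variation, giving $|\phi(u,j)-\phi(u,j')|\le \tfrac12\sum_v d(u)p(u,v)|h(u)-h(v)|$ for $u\neq b,c$ (with $\phi\equiv 0$ at $b,c$, since $d(b)=d(c)=1$). Writing $\Delta\Phi_t := \sum_u [\phi(u,r_t(u))-\phi(u,r_0(u))]$, each step of the walk changes the summand at $u=x_s$ by $h(x_{s+1})-\bar h(x_s)$; summing over $s<t$ and regrouping by vertex yields the master identity
\[
n_t(b)-h(a)\bigl(n_t(b)+n_t(c)\bigr) \;=\; \Delta\Phi_t + \bigl(h(a)-h(x_t)\bigr),
\]
whose right-hand side is bounded by $(\Khitprob-1)+1=\Khitprob$.

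To extend this to $\tau=m\omega+t$, I apply the master identity within each of the first $m$ (transient) epochs and within the final partial epoch $[m\omega,\tau]$. Inside a transient epoch the walker visits every vertex only finitely often, hence eventually leaves every finite set; combined with the hypothesis $\limsup_v h(v)=0$, which makes $\{v:h(v)>\epsilon\}$ finite for every $\epsilon>0$, this forces $h(x_{i\omega+t})\to 0$ as $t\to\infty$. The $i$-th epoch identity then passes to the limit as
\[
\Delta n^{(i)}(b)-h(a)\bigl(\Delta n^{(i)}(b)+\Delta n^{(i)}(c)+1\bigr) \;=\; \Delta\Phi_{(i+1)\omega}-\Delta\Phi_{i\omega},
\]
with the extra ``$+1$'' absorbing the surviving $h(a)-0=h(a)$. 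Summing across all $m+1$ epochs telescopes the $\Delta\Phi$ differences and contributes $m$ copies of $h(a)$, giving
\[
n_\tau(b)-h(a)\bigl(n_\tau(b)+n_\tau(c)+m\bigr) \;=\; \Delta\Phi_\tau + \bigl(h(a)-h(x_\tau)\bigr),
\]
again bounded by $\Khitprob$; dividing through yields the claim.

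The main obstacle is the clean passage to the $\omega$-limit: one must ensure that $\Delta\Phi$ remains well-defined at transfinite times (the per-vertex increments are dominated by the summable quantity above, so absolute convergence is guaranteed by $\Khitprob<\infty$, and the series commutes with the $\omega$-limits), and that the boundary term $h(x_{i\omega+t})$ really decays to $0$. Both reduce to the two explicit hypotheses of the theorem; without $\limsup h(v)=0$ each escape would contribute an uncontrolled boundary value rather than the clean $h(a)$ that produces the $+m$.
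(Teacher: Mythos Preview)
Your argument is correct and is essentially the same as the paper's proof: both track the potential $\Phi$ (your $\Delta\Phi_t$ is, up to a sign convention on $\phi$, the paper's $\Phi(x_t,r_t)-h(x_t)$) across transfinite times, use $\limsup_v h(v)=0$ together with transience to get $h(x_t)\to 0$ at the end of each epoch, and invoke the summable domination coming from $\Khitprob<\infty$ to justify passing to the $\omega$-limit. The only difference is cosmetic: you telescope over epochs explicitly, while the paper packages the same content as ``$\Phi$ jumps by $h(a)$ at each restart'' (its equation for $\Phi(x_\omega,r_\omega)-\lim_t\Phi(x_t,r_t)$).
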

Thus the proportion of times the particle hits $b$
as opposed to hitting $c$ or going to infinity concentrates around $h(a)$.
Furthermore, Lemma \ref{trans-rec-trans} ensures
that $n_\tau(b)+n_\tau(c)+m\to\infty$ as $\tau\to M\omega$,
so that the proportion converges to $h(a)$.
The proof of Theorem \ref{hit-prob-tf} may be easily adapted to cover
the probability of hitting a single vertex $b$
as opposed to escaping to infinity.

Next, for a vertex $b$,
write $\numvis(v)=\numvis_b(v):=\E_v \sum_{t=0}^\infty \ind[X_t=b]$
for the expected total number of visits to $b$.
Note that this is finite for an irreducible, transient Markov chain.

\begin{thm}[Transfinite walks and number of visits]
\label{num-vis-tf} Consider an irreducible, transient Markov chain
and fix vertices $a,b$.  Suppose that $$\limsup_{v\in V} \numvis(v)=0.$$
Suppose moreover that the quantity
$$\Knumvistf := \sup_{v\in V} \numvis(v) + \frac12 \Big( d(b) +
\sum_{u,v\in V} d(u)p(u,v)\big|\numvis(u)-\numvis(v)| \Big)$$ is finite.
Then for any transfinite walk started at $a$,
and for any transfinite time $\tau=m\omega+t$ at which
all vertices have been visited only finitely often,
$$\left| \numvis(a)-\frac{n_\tau(b)}{m} \right| \leq \frac{\Knumvistf}{m}.$$
\end{thm}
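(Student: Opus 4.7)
The plan is to adapt the step-by-step discrepancy argument used for Theorem~\ref{hit-prob}, now summed over the entire transfinite walk with $\numvis$ as the potential. I would first note the source equation $\numvis(u)=\ind[u=b]+\sum_v p(u,v)\numvis(v)$ (by conditioning on the first step), so that $\sum_v p(u,v)\numvis(v)-\numvis(u)=-\ind[u=b]$, and then compute the telescoping sum $\sum_{s<\tau}[\numvis(x_{s+1})-\numvis(x_s)]$ in two different ways.

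Grouping by time: within each complete excursion $k\in\{0,\ldots,m-1\}$ the walk starts at $a$, visits each vertex only finitely often, and hence eventually leaves every finite set; together with the hypothesis $\limsup_{v\in V}\numvis(v)=0$, this forces $\numvis(x_{k\omega+s})\to 0$ as $s\to\infty$, so each complete excursion telescopes to $-\numvis(a)$. The partial excursion from $m\omega$ to $\tau$ contributes $\numvis(x_\tau)-\numvis(a)$, giving the grand total $-(m+1)\numvis(a)+\numvis(x_\tau)$. Grouping instead by the departure vertex $u$: the rotor at $u$ is incremented continuously through the $m$ restarts, so by time $\tau$ it has advanced $n_\tau(u)$ times; writing $n_\tau(u)=q_u d(u)+\rho_u$ with $0\le\rho_u<d(u)$, the $q_u$ full rotor cycles at $u$ contribute $-q_u d(u)\ind[u=b]$ (by the source equation) and the residual contributes $R(u):=\sum_{i\in J_u}[\numvis(u^{(i)})-\numvis(u)]$ over a cyclic arc $J_u$ of length $\rho_u$. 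Summed, this side equals $-n_\tau(b)+\rho_b+\sum_u R(u)$.

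Equating the two expressions yields
$$n_\tau(b)-m\numvis(a)=\numvis(a)-\numvis(x_\tau)+\rho_b+\sum_u R(u),$$
and I would then bound each term. Since $\numvis\ge 0$, $|\numvis(a)-\numvis(x_\tau)|\le\sup_v\numvis(v)$. For $u\ne b$ the increments $\phi_i:=\numvis(u^{(i)})-\numvis(u)$ sum to zero over one cycle, so $\sum_{J_u^c}\phi_i=-\sum_{J_u}\phi_i$; applying the triangle inequality to both halves and averaging yields $|R(u)|\le\tfrac12\sum_i|\phi_i|=\tfrac12\sum_v d(u)p(u,v)|\numvis(u)-\numvis(v)|$. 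For $u=b$ the shifted increments $\psi_i:=\phi_i+1$ do sum to zero, and $R(b)+\rho_b=\sum_{J_b}\psi_i$; applying the same averaging to $\psi_i$ together with $|\psi_i|\le|\phi_i|+1$ gives $|R(b)+\rho_b|\le\tfrac12\sum_v d(b)p(b,v)|\numvis(b)-\numvis(v)|+\tfrac12 d(b)$. Assembling the bounds produces exactly $\Knumvistf$ on the right; dividing by $m$ concludes.

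The main obstacle will be ensuring the total rotor discrepancy is bounded by a constant \emph{independent of $m$}. This works because the rotor state is continuous across the $m$ restarts: only the cumulative counts $n_\tau(u)$ enter the vertex-side sum, so each residual $R(u)$ is controlled purely by the rotor mechanism rather than being summed across excursions. A secondary, more mechanical point is the non-harmonicity of $\numvis$ at $b$, which is responsible for the extra $\tfrac12 d(b)$ term appearing in $\Knumvistf$.
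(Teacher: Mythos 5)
Your proposal is correct and is, in substance, the same argument as the paper's. The paper works with the potential $\Phi(x,r)=f(x)+\sum_u[\phi(u,r(u))-\phi(u,r_0(u))]$ from the proof of Proposition~\ref{key} applied to $f=\numvis_b$, shows (mimicking Theorem~\ref{hit-prob-tf}) that $\Phi$ jumps by $\numvis(a)$ at each restart, and then bounds $\Phi(x_\tau,r_\tau)-\Phi(x_0,r_0)$ exactly as you bound the right-hand side. Your ``two ways to count $\sum_{s<\tau}[\numvis(x_{s+1})-\numvis(x_s)]$'' is the same computation with the $\Phi$ notation unrolled: your residuals $R(u)$ (and $R(b)+\rho_b$) are precisely $-(\phi(u,r_\tau(u))-\phi(u,r_0(u)))$, and your averaging bound is Lemma~\ref{cyclic-sum}. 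The one point you pass over silently is the justification for interchanging the order of summation when regrouping the (conditionally convergent, transfinite) sum from excursions to vertices; the paper handles exactly this in the proof of Theorem~\ref{hit-prob-tf} via a dominated-convergence argument with dominating envelope controlled by the finiteness of the constant $K_i$. That step should be made explicit for full rigor, but it is the same fix the paper uses.
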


It is natural to ask how recurrence and transience of rotor walks
are related to recurrence and transience
of the associated Markov chain.
The following variant of an unpublished result of Oded Schramm
provides an answer in one direction:
in a certain asymptotic sense,
the rotor walk is no more transient than the Markov chain.
For a transfinite rotor walk started at vertex $a$,
let $I_n$ be the number of times the walk goes to infinity
before the $n$th return to $a$ (i.e.\ $I_n:=\max\{m\geq 0:n_{m\omega}(a)<n\}$
--- this is well defined by Lemma \ref{trans-rec-trans};
recall that the walk is restarted at $a$ after each escape to infinity).
\begin{thm}[Transience density; Oded Schramm]\label{density}
Consider an irreducible Markov chain,
and an associated transfinite rotor walk started at vertex $a$.
With $I_n$ as defined above we have
$$
\limsup_{n\to\infty} \frac{I_n}{n} \leq \P_a(T^+_a=\infty).
$$
\end{thm}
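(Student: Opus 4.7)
The plan is to combine the rotor-walk flow identity used in the proofs of Theorems \ref{hit-prob}--\ref{hit-prob-tf} with an exhaustion of $V$ by finite sets. Fix $\epsilon > 0$. Since $\P_a(\tau_B < T^+_a) \downarrow \P_a(T^+_a = \infty)$ as $B$ increases through an exhaustion of $V$ by finite sets (with $\tau_B := \min\{t \geq 0 : X_t \notin B\}$), I would choose a finite $B \ni a$ with $p_B := \P_a(\tau_B < T^+_a) \leq \P_a(T^+_a = \infty) + \epsilon$. It then suffices to show $\limsup_n I_n/n \leq p_B$ and let $\epsilon \to 0$; if only finitely many segments are escape segments then $I_n$ is bounded and the result is immediate, so I may assume infinitely many escapes occur.

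Define $\phi : V \to [0,1]$ by $\phi(v) := \P_v(\tau_B < T_a)$ for $v \in B \setminus \{a\}$, $\phi(a) := 0$, and $\phi(v) := 1$ for $v \notin B$. First-step analysis gives $\phi = P\phi$ on $B \setminus \{a\}$, $(\phi - P\phi)(a) = -p_B$, and $(\phi - P\phi)(v) = 1 - P\phi(v) \geq 0$ for $v \notin B$. The crucial property for us is that in each escape segment $i$, the walker visits the finite set $B$ only finitely often, so eventually $x^{(i)}_t \in V \setminus B$ and hence $\phi(x^{(i)}_t) \to 1$. Applying the rotor-walk balance identity $\sum_v n_t(v)(\phi - P\phi)(v) = \phi(x_0) - \phi(x_t) + E_t$ within each of the first $m$ escape segments, summing, and writing $N(v) := n_{m\omega}(v)$, I would obtain
\begin{equation*}
-p_B\, N(a) + \sum_{v \notin B} N(v)\bigl(1 - P\phi(v)\bigr) = -m + E_{m\omega},
\end{equation*}
where the rotor error $E_{m\omega} = \sum_v A_{N(v)}(v)$ (with $A_r(v)$ the $r$-th partial sum of the zero-mean sequence $(\phi(v^{(j)}) - P\phi(v))_j$) is uniformly bounded in $m$ by a $\Khitprob$-type constant, because rotor states accumulate linearly across transfinite segments. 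Since the second term on the left is non-negative, rearranging yields $m \leq p_B\, N(a) + |E_{m\omega}|$, hence $m/N(a) \leq p_B + O(1/N(a))$. Translating back via $I_n = m$ for $n \in (N(a), n_{(m+1)\omega}(a)]$ then gives $\limsup_n I_n/n \leq p_B$.

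The main obstacle will be verifying the uniform bound on $|E_{m\omega}|$ when $V$ is infinite, because individual escape segments may visit infinitely many distinct vertices. The saving observation is that $A_{N(v)}(v) = 0$ whenever all successors of $v$ lie outside $B$ (since $\phi \equiv 1$ on such successors makes the defining sequence identically zero), so only $v \in B$ together with those vertices having at least one successor in $B$ contribute to the error sum. Finiteness of the resulting Dirichlet-energy-type bound $\tfrac{1}{2}\sum_{u,v} d(u)p(u,v)|\phi(u) - \phi(v)|$ then reduces to a mild condition on the in-neighborhood of $B$, which I would verify for a general irreducible rotor mechanism, if necessary by choosing the extension of $\phi$ outside $B$ more carefully than the naive $\phi \equiv 1$.
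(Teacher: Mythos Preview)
Your approach is genuinely different from the paper's and contains a real gap.

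\textbf{What the paper does.} The paper does \emph{not} apply the rotor balance identity on the original chain. Instead it truncates: for each $d$ it modifies the chain so that particles hitting $\partial B(d)$ are sent back to $a$, runs the ordinary (finite-state) rotor walk, and lets $R_n^d$ count returns to $a$ via $a_1$. Lemma~\ref{mono} (an application of the Abelian property) gives $R_n^{d+1}\geq R_n^d$, and Lemma~\ref{conv} gives $R_n^d\to R_n$; together $R_n\geq R_n^d$. Theorem~\ref{hit-prob} applied to the \emph{finite} truncated chain then gives $R_n^d/n\to\P_a(T^+_a<T_{\partial B(d)})$, and the result follows.

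\textbf{The gap in your argument.} Your inequality $m\leq p_B\,N(a)+|E_{m\omega}|$ is correct once you know $|E_{m\omega}|$ is uniformly bounded, and your reduction to vertices having at least one successor in $B$ is also correct. But for a general irreducible rotor mechanism this set need not be finite, and the Dirichlet-type sum $\sum_{u,v}d(u)p(u,v)|\phi(u)-\phi(v)|$ can diverge for \emph{every} choice of $\phi$ with $\phi(a)=0$ and $\phi\to 1$ along escape paths. Concretely, take $V=\{a,1,2,\dots\}$ with $p(a,1)=1$, $p(n,a)=2^{-n}$, $p(n,n+1)=1-2^{-n}$, and $d(n)=2^n$. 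Then every $n$ has a successor in $B$ (namely $a$), and one checks that for any bounded $\phi$ with $\phi(a)=0$ and $\limsup_n\phi(n)\geq 1$ the contribution of vertex $n$ to the error bound of Lemma~\ref{cyclic-sum} is bounded below by a positive constant independent of $n$, so the sum over $n$ diverges. Thus the hedge ``choosing the extension of $\phi$ outside $B$ more carefully'' cannot rescue the argument in general: the obstruction is the unbounded rotor periods $d(u)$ of in-neighbours of $B$, not the particular extension. The paper's use of the Abelian property is precisely what circumvents this, because on each truncation the rotor walk lives on a finite vertex set and the error bound of Theorem~\ref{hit-prob} is automatically finite.

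If you add the hypothesis that every vertex has finitely many in-neighbours (so $B^*$ is finite for every finite $B$), your argument goes through and gives a pleasant alternative proof; but as stated the theorem covers chains where that fails.
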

In particular we note that for a recurrent Markov chain the right side
in Theorem \ref{density} is zero, so the sequence of escapes to
infinity has density zero in the sequence of returns to $a$.  On the
other hand, for a recurrent Markov chain it is possible for a rotor
walk to go to infinity, for example in the case of simple symmetric
random walk on $\Z$, with all rotors initially pointing in the same
direction.

Moreover, for simple random walk on $\Z^2$ with all rotors
initially in the same direction, the rotor walk goes to
infinity infinitely many times.  (To check this, suppose the
rotors rotate anticlockwise and initially point East.  Whenever
the particle's horizontal coordinate achieves a new maximum, it
is immediately sent directly Northwards to infinity.  This
happens infinitely often by Lemma \ref{trans-rec-trans}.)  See
Figure \ref{sim-tf} for a simulation of this remarkable
process, and see \cite{propp} for further discussion.

On the other hand, it should be noted that the rotor walk on
$\Z^2$ is recurrent for the initial configuration in Theorem
\ref{hit-prob-log} (see Figure \ref{swastika}). It is also
possible for the rotor walk to be recurrent for a transient
Markov chain, for example in the case of simple random walk on
an infinite binary tree, with all rotors arranged so as to next
send the particle towards the root. Landau and
Levine~\cite{landau-levine} studied the rotor walk on regular
trees in great detail, in particular identifying exactly which
sequences $(I_n)_{n\geq 0}$ are possible on the binary tree.
Further work on rotor walks on trees will appear in
\cite{angel-holroyd}.
\begin{figure}[t]
\centering
\resizebox{10cm}{!}{\includegraphics{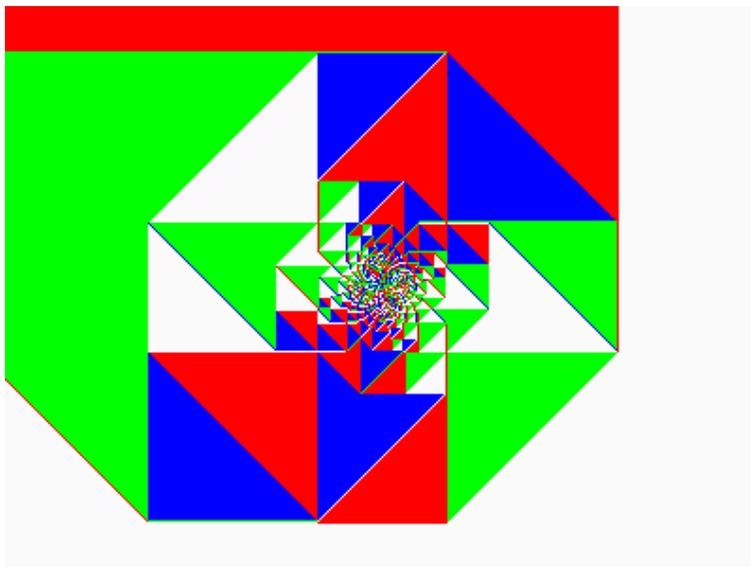}}
\caption{
The rotor configuration after $500$ restarts from $a=(0,0)$,
for the transfinite rotor walk on $\Z^2$ with all rotors
initially pointing East.
The rotor directions are: East=white, North=red, West=green, South=blue.
The red region extends infinitely far to the North.
}\label{sim-tf}
\end{figure}

\subsection{Stack walks} \label{stack-walks}

To generalize rotor walks to Markov chains with
irrational transition probabilities,
we must allow the particle to be routed to
a non-periodic sequence of vertices
on its successive visits to a given vertex.

Given a set $V$, a {\dof stack mechanism} is an assignment
of an infinite sequence of successors $u^{(1)},u^{(2)},\ldots$
to each vertex $u\in V$.
The {\dof stack walk} started at $x_0$ is
a sequence of vertices $x_0,x_1,\ldots$ defined inductively by
$$x_{t+1}:=x_t^{(n_t(x_t)+1)}$$
where
$$n_t(v):=\#\{s\in[0,t-1]:x_s=v\}.$$
(Note that, in the case of rational transition probabilities
considered previously, the rotor walk can be regarded as a special
case of a stack walk, with the periodic stacks given by $u^{(k
d(u)+j)}=u^{(j)}$ for $1\leq j\leq d(u)$ and $k\geq 0$.)

We illustrate the use of stacks with
Theorem \ref{stack} below on hitting probabilities.
The following will enable us to choose a suitable stack mechanism.
%
%\pagebreak
\begin{prop}[Low-discrepancy sequence]\label{seq}
Let $p_1,\ldots,p_n\in(0,1]$ satisfy $\sum_i p_i=1$.  There exists a sequence
$z_1,z_2,\ldots \in\{1,\ldots,n\}$ such that for all $i$ and $t$,
\begin{equation}\label{ineq}
 \Big|p_i t - \#\{s\leq t: z_s=i\}\Big|\leq 1.
\end{equation}
\end{prop}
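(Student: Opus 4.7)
\emph{Plan.} My approach is to construct the sequence greedily and verify the discrepancy bound by induction. Define $z_{t+1}$ to be any index $i \in \{1, \ldots, n\}$ maximizing the deficit $D_i^+(t) := p_i(t+1) - c_i(t)$, where $c_i(t) := \#\{s \le t : z_s = i\}$. Since $\sum_i D_i^+(t) = (t+1) - t = 1$, the maximum is at least $1/n > 0$, so a valid choice always exists, and intuitively the greedy picks the index most in need of another visit.

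To establish the bound $|p_i t - c_i(t)| \le 1$, I would induct on $t$ using the signed discrepancies $D_i(t) := p_i t - c_i(t)$ (so that $D_i^+(t) = D_i(t) + p_i$) together with the conservation law $\sum_i D_i(t) = 0$. The update rule is $D_{z_{t+1}}(t+1) = D_{z_{t+1}}(t) + p_{z_{t+1}} - 1$, and $D_j(t+1) = D_j(t) + p_j$ for $j \ne z_{t+1}$. The lower bound $D_i(t) \ge -1$ is immediate: $D_i$ decreases only when $i$ is chosen, and the maximality of $D_i^+(t)$ combined with $\sum_k D_k^+(t) = 1$ forces $D_i^+(t) \ge 1/n$, giving $D_i(t+1) = D_i^+(t) - 1 > -1$. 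For the chosen index, the upper bound is easy too: $D_{z_{t+1}}(t+1) = D_{z_{t+1}}(t) + p_{z_{t+1}} - 1 \le D_{z_{t+1}}(t) \le 1$ by the inductive hypothesis.

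The remaining case---the upper bound for non-chosen indices---is the main obstacle. A priori $D_j(t+1) = D_j(t) + p_j$ could exceed $1$; the greedy's defence is that whenever some $D_j(t) + p_j > 1$, the conservation law $\sum_i D_i^+(t) = 1$ forces the other $D_k^+$ deeply negative, so (by the inductive lower bound $D_k \ge -1$) at most one coordinate can be ``overloaded'' at a time, and the greedy selects it. The algebraic verification of this claim, keeping track of how many coordinates may simultaneously violate the threshold, is the technical core. As a fallback---particularly robust for all $n$---one can argue non-constructively via compactness: it suffices to produce a valid length-$T$ sequence for each finite $T$, and this finite problem can be recast as a bipartite matching, namely assigning each ``event'' $(i, k)$ (the $k$-th occurrence of symbol $i$) to a distinct integer time in the window $[(k-1)/p_i, (k+1)/p_i]$. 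Hall's marriage condition for this matching is readily checked by counting events whose windows lie in a given interval, using $\sum_i p_i = 1$; a König-style compactness argument then yields the infinite sequence.
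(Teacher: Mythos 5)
Your primary approach (the greedy) is a sensible idea, but the gap you flag---showing $D_j(t)+p_j\le 1$ for the non-chosen $j$---is a genuine one, and the heuristic you offer does not close it. Your claim that the conservation law ``forces at most one coordinate to be overloaded at a time'' is correct only for $n\le 3$: if $D_{i}^+(t),D_{j}^+(t)>1$, then $\sum_{k\ne i,j}D_k^+(t)<-1$, while the inductive lower bound $D_k\ge -1$ (even the sharper $D_k\ge 1/n-1$) only gives $\sum_{k\ne i,j}D_k^+(t)\ge -(n-2)+\text{(small)}$, which yields no contradiction once $n\ge 4$. In fact the naive greedy is not what is used to settle the related chairman-assignment problem (Tijdeman), and you would need a more refined invariant than any stated here to make the greedy argument rigorous.

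Your fallback is much closer in spirit to the paper's actual proof (which reduces to rational $p_i$, builds a bipartite graph of time-slots versus ``occurrences'' $(i,m)$, verifies Hall's condition, and reads off the sequence from a perfect matching, then handles irrational $p_i$ by a compactness argument on the probability vector rather than, as you suggest, on the sequence length---both work). However, the window you propose, $[(k-1)/p_i,(k+1)/p_i]$, is too wide. It only guarantees $|p_it-c_i(t)|\le 1$ at the times $t$ where an occurrence of $i$ is placed; between the $k$-th occurrence placed at time $\approx (k-1)/p_i$ and the $(k+1)$-st placed at time $\approx (k+2)/p_i$, the count $c_i$ stalls at $k$ while $p_it$ climbs to nearly $k+2$, giving discrepancy close to $2$. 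The wider window also fails to force the natural ordering of occurrences (the $k$-th occurrence could be assigned a later slot than the $(k+1)$-st), which the paper's construction needs. The paper instead uses the essentially half-width window $\lceil(m-1)/p_i\rceil\le t\le\lceil m/p_i\rceil$, which does deliver the bound of $1$ and automatically orders occurrences; with that narrower window, Hall's condition is no longer ``readily checked'' and the paper carries out a genuine combinatorial verification for arbitrary unions of intervals $T\subseteq\{1,\dots,d\}$. As written, your proposal leaves both the greedy's inductive step and Hall's condition unproved, and the window you chose would not yield the stated bound even if the matching existed.
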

Let $p$ be a Markov transition kernel on $V$,
and suppose that for each vertex $u$
there are only finitely many vertices $v$ such that $p(u,v)>0$.
We may then choose a stack mechanism according to Proposition \ref{seq}.
More precisely, for each vertex $u$,
enumerate the vertices $v$ such that $p(u,v)>0$ as $v_1,\ldots,v_n$,
and set $p_i=p(u,v_i)$.
Then let $u^{(j)}:=v_{z_j}$ where $z$ is the sequence
given by Proposition \ref{seq}.
Now let $a,b,c$ be distinct vertices
and assume that $p(b,a)=p(c,a)=1$ and $b^{(i)}=c^{(i)}=a$ for all $i$.
Write $h=h_{b,c}$.
\begin{thm}[Stack walks]\label{stack}
Under the above assumptions, suppose that
$$\Kstack:=1+\sum_{\substack{u\in V\setminus\{b,c\}, v\in V: \\ p(u,v)>0}}
|h(u)-h(v)|$$
is finite.  For the stack mechanism described above, and any $t$,
$$\left| h(a)-\frac{n_t(b)}{n_t(b)+n_t(c)}\right| \leq
\frac{\Kstack}{n_t(b)+n_t(c)}.
$$
\end{thm}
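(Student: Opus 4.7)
The plan is to mimic the rotor-walk proof of Theorem~\ref{hit-prob}, with the deterministic periodic rotor replaced by the low-discrepancy stack provided by Proposition~\ref{seq}. The crucial input is that, because the stack at each $u$ is constructed from a sequence $z$ satisfying \eqref{ineq}, after $n_t(u)$ visits to $u$ the number of transitions into each neighbor $v$ deviates from the ideal by at most $1$. Precisely, letting $N_t(u\to v):=\#\{s\in[0,t-1]:x_s=u,x_{s+1}=v\}$ and writing $\epsilon_t(u,v):=N_t(u\to v)-p(u,v)n_t(u)$, we have $|\epsilon_t(u,v)|\leq 1$ when $p(u,v)>0$ and $\epsilon_t(u,v)=0$ otherwise, while at $b$ and $c$ there is no discrepancy at all since $p(b,a)=p(c,a)=1$ and the stacks there are constant.

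Next I would set up the mass-balance identity. Since $x_0=a$, for every vertex $v$,
$$n_t(v)+\ind[x_t=v]=\ind[v=a]+\sum_u N_t(u\to v).$$
Multiplying by $h(v)$ and summing over $v$, the LHS equals $n_t(b)+\sum_{u\notin\{b,c\}}n_t(u)h(u)+h(x_t)$ (using $h(b)=1$, $h(c)=0$), and on the RHS the contributions from $u\in\{b,c\}$ collapse to $h(a)(n_t(b)+n_t(c))$. For $u\notin\{b,c\}$, harmonicity of $h$ in the (modified) chain gives
$$\sum_v h(v)N_t(u\to v)=n_t(u)h(u)+\sum_v h(v)\epsilon_t(u,v).$$
After cancelling the $n_t(u)h(u)$ terms that appear on both sides, the identity reduces to
$$n_t(b)-h(a)\bigl(n_t(b)+n_t(c)\bigr)=h(a)-h(x_t)+\sum_{u\notin\{b,c\}}\sum_v h(v)\epsilon_t(u,v).$$

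The last step is to estimate the right-hand side. The term $|h(a)-h(x_t)|$ is at most $1$ since $h$ takes values in $[0,1]$. For the double sum, the key observation is that $\sum_v \epsilon_t(u,v)=0$ (both $\sum_v N_t(u\to v)$ and $\sum_v p(u,v)n_t(u)$ equal $n_t(u)$), so one may subtract $h(u)$ from each $h(v)$ without changing the value, giving the bound $\sum_{v:p(u,v)>0}|h(u)-h(v)|$ for each $u$. Summing and dividing by $n:=n_t(b)+n_t(c)$ yields exactly the claimed inequality with constant $\Kstack$. The only delicate point — and thus the main obstacle — is the bookkeeping at the boundary vertices $b,c$ (where $h$ is not harmonic) and at the starting point $a$ and the current location $x_t$; once these are handled correctly, the computation closes with no further ideas needed beyond the low-discrepancy property of the stacks.
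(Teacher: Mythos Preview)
Your proof is correct and is essentially the paper's argument repackaged: your discrepancy $\epsilon_t(u,v)$ is exactly the paper's $D_{n_t(u)}(u,v)$, and your mass-balance identity (multiply the flow equation by $h(v)$, sum, and use harmonicity away from $b,c$) is the specialization to $f=h$ of the paper's Proposition~\ref{stack-key}, which it derives via a potential function $\Psi$ rather than transition counting. The centering step $\sum_v \epsilon_t(u,v)=0$ that lets you replace $h(v)$ by $h(v)-h(u)$ corresponds precisely to the simplification of $\psi(u,n)$ in the paper's proof of Proposition~\ref{stack-key}.
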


Proposition \ref{seq} can in fact be extended to the case of
infinite probability vectors~\cite{ahmp}, and
Theorem~\ref{stack} carries over straightforwardly to this
case. However, the result appears to have few applications in
this broader context, since $\sum_v |h(u)-h(v)|$ is typically
infinite when $u$ has infinitely many successors.

\subsection{Further Remarks.} \label{further-remarks}

\paragraph{History.} The rotor-router model was
introduced by Priezzhev, Dhar, Dhar and
Krishnamurthy~\cite{pddk} (under the name ``Eulerian walkers
model'') in connection with self-organized criticality. A
special case was rediscovered in~\cite{dtw} in the analysis of
some combinatorial games.  The present article reports the
first work on the close connection between rotor walks and
Markov chains, originating in discussions between the two
authors at a meeting in 2003. (Such a connection was however
anticipated in the ``whirling tours'' theorem of~\cite{dtw},
which shows that for random walk on a tree, the expected
hitting time from one vertex to another can be computed by
means of a special case of rotor walk; see also \cite{propp}.)
A special case of results presented here was reported
in~\cite{kleber}, and earlier drafts of the current work
provided partial inspiration for some of the recent progress
in~\cite{cooper-doerr-friedrich-spencer,cooper-doerr-spencer-tardos-0,
cooper-doerr-spencer-tardos,cooper-spencer,doerr-friedrich,hlmppw,
levine-peres-3,levine-peres-2,levine-peres}, which we discuss
below.

The idea of stack walks has its roots in Wilson's approach to
random walks via random stacks; see~\cite{wilson}.

\paragraph{Time-dependent bounds.}
We have chosen to focus on upper bounds of the form $K_i/n$,
where $K_i$ is a fixed constant not depending on time $t$. If
this latter requirement is relaxed, our proofs may be adapted
to give bounds that are stronger in some specific cases (at the
expense of less clean formulations). Specifically, in each of
Theorems \ref{hit-prob}--\ref{stat-prob} and \ref{stack}, the
claimed bound still holds if the relevant constant $K_i$ is
replaced with a modified quantity $K_i(t)$ obtained from $K_i$
by:
\begin{mylist}
\item multiplying the initial additive term ``1'' or ``$\max
    k(v)$'' or ``$\sup g(v)$'' by the indicator $\ind[x_t\neq x_0]$
(so that the term vanishes when the particle returns to its
starting point); and
\item multiplying the summand in the sum $\sum_{u,v}$ by
    $\ind[r_t(u)\neq r_0(u)]$ (so in particular terms
    corresponding to vertices $u$ that have not been visited by time $t$ vanish).
\end{mylist}
The same holds for Theorems \ref{hit-prob-tf} and
\ref{num-vis-tf} in the transfinite case, but replacing $t$
with $\tau$.

The above claims follow by straightforward modifications to our
proofs.  Indeed, our proof of Theorem \ref{hit-prob-log}
employs a special case of this argument.  These and other
refinements will be discussed more fully in the forthcoming
article \cite{propp}.

\paragraph{Abelian property.} \label{abelian-property}
The rotor-router model has a number of interesting properties
that will not be used directly in most of our proofs but which
are nonetheless relevant. In particular, it enjoys an ``Abelian
property'' which allows rotor walks to be parallelized.
Specifically, consider a Markov chain on a finite set $V$ with
one or more {\dof sinks}, i.e.\ vertices $s$ with $p(s,s)=1$,
and suppose that from every vertex, some sink is accessible (so
that the Markov chain eventually enters a sink almost surely).
Then we may run several rotor walks simultaneously as follows.
Start with an initial rotor configuration, and some
non-negative number of particles at each vertex.  At each step,
choose a particle and route it according to the usual rotor
mechanism; i.e.\ increment the rotor at its current vertex and
move the particle in the new rotor direction. Continue until
all particles are at sinks.  It turns out that the resulting
configuration of particles and rotors is independent of the
order in which we chose to route the particles.  This is the
Abelian property; see e.g.~\cite[Lemma 3.9]{hlmppw} for a proof
(and generalizations).

In the situation of Theorem \ref{hit-prob}, for example,
assume that $V$ is finite and the Markov chain is irreducible,
and then modify it to make vertices $b$ and $c$ sinks.
Start $n$ particles at vertex $a$ and perform simultaneous rotor walks.
The Abelian property implies that the number of particles eventually at $b$
is the same as the number $n_t(b)$ of times that $b$ is visited
when $n_t(b)+n_t(c)=n$ in the original set-up of Theorem \ref{hit-prob},
and the bound of Theorem \ref{hit-prob} therefore applies.

\sloppy A similar Abelian property holds
for the ``chip-firing'' model introduced by Engel \cite{engel-1,engel-2}
(later re-invented by Dhar \cite{dhar}
under the name ``abelian sandpile model''
as another model for self-organized criticality).
The two models have other close connections,
and in particular there is a natural group action
involving sandpile configurations acting on rotor configurations.
More details may be found in~\cite{hlmppw} and references therein.
Engel's work was motivated by an analogy
between Markov chains and chip-firing
(indeed, he viewed chip-firing as an ``abacus''
for Markov chain calculations).

\paragraph{Periodicity.} \label{periodicity} In the case when $V$ is
finite, we note the following very simple argument which gives
bounds similar to Theorems \ref{hit-prob}--\ref{stat-prob} but
with (typically) much worse constants. Since there are only
finitely many rotor configurations, the sequence of vertices
$((x_t,r_t))_{t\geq 0}$ is eventually periodic (with explicit
upper bounds on the period and the time taken to become
periodic which are exponentially large in the number of
vertices). Therefore the proportion of time $n_t(v)/t$ spent at
vertex $v$ converges as $t\to\infty$ to some quantity $\mu(v)$,
say, with a discrepancy bounded by $\mbox{const}/t$.
Furthermore, as a consequence of the rotor mechanism, we have
$\mu(u)=\sum_{v\in V} p(u,v) \mu(v)$ for all vertices $u$
(because after many visits to $u$, the particle will have been
routed to each successor approximately equal numbers of times).
Thus $\mu$ is a stationary distribution for the Markov chain.
This implies the bound in Theorem \ref{stat-prob}, except with
a different (and typically much larger) constant in place of
$\Kstatprob \pi(b)$. Similar arguments yield analogues of
Theorems \ref{hit-prob}--\ref{stat-vec}, but only in the case
where $V$ is finite.

\paragraph{Related work.} \label{related-work}
As remarked earlier, rotor walks on trees were studied in
detail by Landau and Levine~\cite{landau-levine}.  Further
results on rotor walks on trees will appear in a forthcoming
work of Angel and Holroyd~\cite{angel-holroyd}, and further
refinements and discussions of the results presented here will
appear in Propp~\cite{propp}.

Cooper and Spencer~\cite{cooper-spencer} studied the following
closely related problem. For the rotor walk associated with
simple symmetric random walk on $\Z^d$, start with $n$
particles at the origin, or more generally distributed in any
fashion on vertices $(i_1,\dots,i_d)$ with $i_1+\dots+i_d$
even, and apply one step of the rotor walk to each particle;
repeat this $t$ times. (It should be noted that the Abelian
property does {\em not} apply here --- the result is not the
same as applying $t$ rotor steps to each particle in an
arbitrary order; see~\cite{hlmppw}.) It is proved
in~\cite{cooper-spencer} (see Figure~8) that the number of
particles at a given vertex differs from the expected number of
particles for $n$ random walks by at most a constant (depending
only on $d$). Further more precise estimates are proved in
dimension $d=1$ in~\cite{cooper-doerr-spencer-tardos} and in
dimension $d=2$ in~\cite{doerr-friedrich}.

The following rotor-based model for internal diffusion-limited aggregation
(IDLA) was proposed by the second author, James Propp,
and studied by Levine and Peres
in~\cite{levine,levine-peres-3,levine-peres-2,levine-peres}.
Starting with a rotor configuration on $\Z^d$,
perform a sequence of rotor walks starting at the origin,
stopping each walk as soon as it reaches a vertex
not occupied by a previously stopped particle.
It is proved in~\cite{levine-peres} that,
as the number of particles $n$ increases,
the shape of the set of occupied vertices converges to
a $d$-dimensional Euclidean ball;
generalizations and more accurate bounds are proved
in~\cite{levine-peres-3,levine-peres-2}.

\section{Proofs of basic results} \label{proofs-of-basic-results}
%%%%%%%%%%%%%%%%%%%%%%%%%%%%%%%%%

Theorems \ref{hit-prob}--\ref{stat-prob} will all follow
as special cases of Proposition \ref{key} below,
and the remaining results will also follow by adapting the same proof.
For any Markov transition kernel $p$ and any function $f:V\to\R$
we define the {\dof Laplacian} $\Delta f:V\to\R$ by
\begin{equation}\label{def-of-lap}
\Delta f(u):=\sum_{v\in V} p(u,v)f(v)-f(u).
\end{equation}
\begin{prop}[Key bound]\label{key}
For any rotor walk $x_0,x_1,\ldots$ associated with $p$,
any function $f$ and any $t$ we have
$$\Big|\sum_{s=0}^{t-1} \Delta f(x_s)\Big|\leq |f(x_t)-f(x_0)|+
\frac12\sum_{u,v\in V}d(u)p(u,v)\big|f(u)-f(v)+\Delta f(u)\big|.$$
\end{prop}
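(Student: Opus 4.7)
The plan is to relate the Laplacian sum to a telescoping boundary term plus a per-vertex rotor discrepancy, and then bound the latter using the cyclic structure of the rotor. Writing $\bar f(u) := \sum_v p(u,v) f(v) = f(u) + \Delta f(u)$, I would first use $\Delta f(x_s) = \bar f(x_s) - f(x_s)$, add and subtract $f(x_{s+1})$ inside each summand, and telescope the resulting $[f(x_{s+1}) - f(x_s)]$-terms to obtain
\begin{equation*}
\sum_{s=0}^{t-1} \Delta f(x_s) - \bigl[f(x_t) - f(x_0)\bigr] = \sum_{s=0}^{t-1}\bigl[\bar f(x_s) - f(x_{s+1})\bigr].
\end{equation*}
Grouping the right-hand sum by the vertex $u = x_s$ and writing $c_j(u)$ for the rotor index used on the $j$-th visit to $u$ (so that the particle moves to $u^{(c_j(u))}$), this becomes $\sum_{u\in V} D_t(u)$, where
\begin{equation*}
D_t(u) := n_t(u)\,\bar f(u) - \sum_{j=1}^{n_t(u)} f\bigl(u^{(c_j(u))}\bigr).
\end{equation*}

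The key structural step is the observation that $c_1(u), c_2(u), \ldots$ cycles deterministically through $\{1, \ldots, d(u)\}$ in order. Hence, over the first $n := n_t(u)$ visits, each index $i$ is used either $\lfloor n/d(u) \rfloor$ or $\lceil n/d(u) \rceil$ times, and those used the larger number of times form a single consecutive (cyclic) arc $I_u \subseteq \{1,\ldots,d(u)\}$ of size $n \bmod d(u)$. Writing $n = q\,d(u) + r$ with $0 \le r < d(u)$ and using $\sum_{i=1}^{d(u)} f(u^{(i)}) = d(u)\, \bar f(u)$, a direct substitution yields
\begin{equation*}
D_t(u) = r\,\bar f(u) - \sum_{i \in I_u} f(u^{(i)}) = -\sum_{i \in I_u}\bigl[f(u^{(i)}) - \bar f(u)\bigr].
\end{equation*}

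The final ingredient produces the factor $\tfrac12$. By definition, $\sum_{i=1}^{d(u)} [f(u^{(i)}) - \bar f(u)] = 0$, so the sum over $I_u$ equals minus the sum over its complement, which by applying the triangle inequality to both representations and averaging gives
\begin{equation*}
|D_t(u)| \le \tfrac12 \sum_{i=1}^{d(u)} \bigl|f(u^{(i)}) - \bar f(u)\bigr| = \tfrac12 \sum_{v\in V} d(u)\,p(u,v)\,\bigl|f(u) - f(v) + \Delta f(u)\bigr|,
\end{equation*}
where the last equality uses \eqref{freq} and $\bar f(u) - f(u) = \Delta f(u)$. Summing over $u$, applying the triangle inequality, and combining with the telescoping identity proves the claim. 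The main subtlety lies in this last step: a direct triangle-inequality bound on $\sum_{i\in I_u}[\,\cdot\,]$ would lose the factor $\tfrac12$ and give a strictly weaker inequality; exploiting the zero-sum identity to average the two representations is the crucial observation that makes the constants in Theorems \ref{hit-prob}--\ref{stat-prob} come out correctly.
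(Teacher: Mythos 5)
Your proof is correct, and it is essentially the same argument as the paper's: the paper packages the telescoping-and-regrouping step as a potential function $\Phi(x,r)=f(x)+\sum_u[\phi(u,r(u))-\phi(u,r_0(u))]$ whose one-step change equals $\Delta f(x_t)$, which yields precisely your identity $\sum_s \Delta f(x_s)-[f(x_t)-f(x_0)]=\sum_u D_t(u)$, and it then bounds each $D_t(u)$ via a short lemma stating that a partial sum of a zero-sum sequence is at most half the sum of absolute values — exactly your cyclic-arc/averaging observation that produces the factor $\tfrac12$. The presentations differ only cosmetically (potential-function bookkeeping versus explicit regrouping by visit count).
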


The proofs of Theorems \ref{hit-prob}--\ref{stat-prob}
will proceed by applying Proposition \ref{key} to a suitable $f$.
The proof of Proposition \ref{key} will use the following simple fact.
\begin{lemma}\label{cyclic-sum}
  If $\sum_{i=1}^n a_i=0$ then
$\big|\sum_{i=1}^j a_i-\sum_{i=1}^k a_i\big|\leq \frac12\sum_{i=1}^n |a_i|$
for all $j,k\in[1,n]$.
\end{lemma}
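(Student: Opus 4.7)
The plan is to reduce the claim to a simple splitting argument on partial sums. Introduce $S_j := \sum_{i=1}^j a_i$ with the convention $S_0 = 0$; the hypothesis is $S_n = 0$. Without loss of generality assume $j \le k$, and write
$$S_k - S_j = \sum_{i=j+1}^{k} a_i.$$
This already gives the obvious bound $|S_k - S_j| \le \sum_{i=j+1}^{k} |a_i|$.

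The key trick is to use the hypothesis $S_n = 0$ a second way: since $\sum_{i=1}^n a_i = 0$, we have
$$\sum_{i=j+1}^{k} a_i = -\sum_{i \in [1,n]\setminus [j+1,k]} a_i,$$
which yields the complementary bound $|S_k - S_j| \le \sum_{i \in [1,n]\setminus [j+1,k]} |a_i|$. Adding the two bounds, the right-hand sides combine to $\sum_{i=1}^n |a_i|$, so
$$2|S_k - S_j| \le \sum_{i=1}^n |a_i|,$$
which is the desired inequality.

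There is really no obstacle here: the lemma is essentially the statement that a signed sequence with vanishing total sum has all its partial sums lying within an interval of length $\tfrac12 \sum|a_i|$. The only thing to be mildly careful about is handling the case $j = k$ (trivial) and the WLOG reduction $j \le k$ (by symmetry of the claim in $j$ and $k$). I would present the argument in three lines and move on.
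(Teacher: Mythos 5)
Your proof is correct. It does, however, take a genuinely different (though comparably short) route from the paper's. The paper proves the slightly stronger claim that $\bigl|\sum_{i\in S} a_i\bigr| \le \frac12 \sum_{i=1}^n |a_i|$ for \emph{every} subset $S\subseteq\{1,\dots,n\}$, by assuming WLOG that $\sum_{i\in S}a_i \ge 0$, bounding it above by $\sum_{i: a_i>0} a_i$, and then observing that the hypothesis $\sum a_i = 0$ forces $\sum_{i:a_i>0} a_i = \frac12\sum_i |a_i|$. So the paper's argument separates terms by sign. Yours instead keeps the index set $[j+1,k]$ intact, gets the obvious triangle-inequality bound, uses $\sum a_i=0$ to flip to the complementary index set for a second triangle-inequality bound, and averages the two. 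Both exploit the zero-sum hypothesis in the same essential way and both are three-line proofs; yours avoids splitting by sign, the paper's states the subset version explicitly (which is not actually needed downstream, since the application in Proposition~\ref{key} only ever uses differences of prefix sums). For the record, your argument also extends verbatim to arbitrary subsets $S$ by replacing $[j+1,k]$ with $S$, so neither proof is strictly more general once written out.
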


\begin{proof}
We prove the stronger statement that
$|\sum_{i\in S} a_i|\leq \frac{1}{2}\sum_{i=1}^n |a_i|$
for any subset $S$ of $\{1,\ldots,n\}$:
assuming without loss of generality that $\sum_{i\in S}a_i$ is positive,
it is at most
$\sum_{a_i:a_i>0}a_i=\frac{1}{2}(\sum_{i:a_i>0} a_i-\sum_{i:a_i<0} a_i)
=\frac{1}{2}\sum_{i=1}^n |a_i|.$
\end{proof}

\begin{proof}[Proof of Proposition \ref{key}]
Recall that $r_0$ denotes the initial rotor configuration.
For a vertex $x$ and a rotor configuration $r$, consider the quantity
$$\Phi(x,r):=f(x)+\sum_{u\in V} \big[\phi(u,r(u))-\phi(u,r_0(u))\big]$$
where
$$\phi(u,j):=\sum_{i=1}^{j} \big[f(u)-f(u^{(i)})+\Delta f(u)\big].$$
Note that $\Phi(x,r_t)$ is finite
if $r_t$ is any rotor configuration encountered by the rotor walk,
since the only non-zero terms in the sum over $u$ are those
corresponding to vertices that the walk has visited
(this is the reason for including
the term ``$-\phi(u,r_0(u))$" in the above definition).
Note also that the definition of the Laplacian \eqref{def-of-lap}
and the rotor property \eqref{freq} imply for all $u\in V$ that
\begin{equation}\label{cyclic-phi}
  \phi(u,d(u))=0.
\end{equation}

Let us compute the change in $\Phi$ produced by
a step of the rotor walk from $(x_t,r_t)$ to $(x_{t+1},r_{t+1})$.
The only term in the sum over $u$ that changes
is the one corresponding to $u=x_t$, and thus
\begin{align*}
  \Phi(x_{t+1},r_{t+1})-\Phi(x_t,r_t)
  &=f(x_{t+1})-f(x_t)+[\phi(x_t,r_{t+1}(x_t)) - \phi(x_t,r_t(x_t))]\\
  &=f(x_{t+1})-f(x_t)+[f(x_t)-f(x_t^{(r_{t+1}(x_t))})+\Delta f(x_t)]\\
  &=\Delta f(x_t),
\end{align*}
where we have used \eqref{cyclic-phi} in the case when $r_{t+1}(x_t)=1$.
Therefore
$\Phi(x_t,r_t)-\Phi(x_0,r_0)=\sum_{s=0}^{t-1} \Delta f(x_s)$.
Also $\Phi(x_0,r_0)=f(x_0)$, so we obtain
\begin{equation}\label{almost-done}
  \sum_{s=0}^{t-1} \Delta f(x_s)
 =f(x_t)-f(x_0)+\sum_{u\in V} \big[\phi(u,r_t(u))-\phi(u,r_0(u))\big].
\end{equation}

In order to bound the last sum in \eqref{almost-done},
we use \eqref{cyclic-phi} together with Lemma \ref{cyclic-sum}
and the definition of $\phi$ to deduce
\begin{align}
\big|\phi(u,r_t(u))-\phi(u,r_0(u))\big|
&\leq \frac12\sum_{i=1}^{d(u)}\big|f(u)-f(u^{(i)})+\Delta f(u)\big| \nonumber\\
&=\frac12\sum_{v\in V}d(u)p(u,v)\big|f(u)-f(v)+\Delta f(u)\big|
\label{term-bound}
\end{align}
(since $d(u)p(u,v)$ is
the number of $i$ such that $u^{(i)}=v$).
We conclude by applying the triangle inequality to \eqref{almost-done}.
\end{proof}

\begin{proof}[Proof of Theorem \ref{hit-prob}]
  We will apply Proposition \ref{key} with $f=h_{b,c}$.
Note that $h(b)=1$ and $h(c)=0$,
while conditioning on the first step of the Markov chain
gives $h(u)=\sum_{v\in V} p(u,v)h(v)$ for $u\neq b,c$.
Hence, using $p(b,a)=p(c,a)=1$,
  $$\Delta h(u)=\begin{cases}
  0,&u\neq b,c;\\
  h(a)-1,&u=b;\\
  h(a),&u=c,
  \end{cases}$$
  and thus $\sum_{s=0}^{t-1} \Delta h(x_s) = h(a)[n_t(b)+n_t(c)]-n_t(b)$.

Turning to the other terms in Proposition \ref{key}, note that
$|h(x_t)-h(x_0)|\leq 1$, and $h(u)-h(v)+\Delta h(u)=0$ when
$u\in\{b,c\}$ and $v=a$. Substituting into Proposition
\ref{key} gives $\big| h(a)[n_t(b)+n_t(c)]-n_t(b) \big|\leq
\Khitprob$ as required.
\end{proof}

\begin{proof}[Proof of Theorem \ref{hit-time}]
  We will apply Proposition \ref{key} with $f=k_b$.  In this case
  $$\Delta k(u)=\begin{cases}
  -1,&u\neq b;\\
  k(a)&u=b,
  \end{cases}$$
  and thus $\sum_{s=0}^{t-1} \Delta k(x_s)
  =(n_t(b))(k(a)) + (t-n_t(b))(-1)
  =(k(a)+1)n_t(b)-t$.
  Substituting into Proposition~\ref{key}
  and using $|k(x_t)-k(x_0)|\leq \max_{v\in V} k(v)$
  and $k(b)-k(a)+\Delta k(b)=0$ completes the proof.
\end{proof}

To prove Theorem \ref{stat-vec}
we note some elementary facts about Markov chains.
\begin{lemma}\label{escape}
Let $b,c$ be two distinct vertices of an irreducible recurrent Markov chain,
and let $\pi$ be a stationary vector.  Then
$\pi(b)e_{b,c}= \pi(c)e_{c,b}$.
Also the hitting probabilities $h=h_{b,c}$ satisfy
$\Delta h(b)=-e_{b,c}$ and $\Delta h(c) = e_{c,b}$.
\end{lemma}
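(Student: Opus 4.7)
The plan is to establish both identities by standard Markov chain arguments applied to the original (unmodified) chain, using only the excursion representation of stationary measures and one-step conditioning.

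For the cycle identity $\pi(b) e_{b,c} = \pi(c) e_{c,b}$, I would invoke the excursion representation. For an irreducible recurrent chain, the measure $\mu(v) := \E_b \sum_{t=0}^{T^+_b - 1} \ind[X_t = v]$ is stationary with $\mu(b) = 1$, and uniqueness (up to scaling) of stationary measures in the recurrent case forces $\mu = \pi/\pi(b)$. Taking $v = c$ gives $\E_b \#\{0 \leq t < T^+_b : X_t = c\} = \pi(c)/\pi(b)$. I would then compute the same expectation a second way by decomposing the excursion from $b$: with probability $e_{b,c}$ the walk hits $c$ before returning to $b$, and given this, the strong Markov property applied at the first visit to $c$ shows that the number of additional visits to $c$ before returning to $b$ is geometric with success probability $e_{c,b}$. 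Hence the total mean number of visits to $c$ per excursion equals $e_{b,c}/e_{c,b}$. Equating the two expressions yields the identity.

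For the Laplacian values, I would use one-step conditioning. Since $h(b) = 1$, conditioning on $X_1$ gives $\sum_v p(b,v) h(v) = \P_b(T^+_b < T_c) = 1 - e_{b,c}$, whence $\Delta h(b) = -e_{b,c}$. The computation for $c$ is symmetric: $h(c) = 0$ and $\sum_v p(c,v) h(v) = \P_c(T_b < T^+_c) = e_{c,b}$, so $\Delta h(c) = e_{c,b}$.

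I anticipate no real obstacle: both parts reduce to well-known Markov chain facts. The only subtle point is to verify that the one-step decomposition correctly handles corner cases (e.g.\ $X_1 = c$ when computing $\Delta h(b)$, where the contribution is $p(b,c) \cdot h(c) = 0$, consistent with the event $T^+_b < T_c$ failing on $\{X_1 = c\}$). The one nontrivial classical input is the uniqueness (up to scaling) of stationary measures for irreducible recurrent chains, which is what pins down $\mu = \pi/\pi(b)$.
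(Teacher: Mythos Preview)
Your proposal is correct and follows essentially the same approach as the paper: the paper also computes $\E_b N = \pi(c)/\pi(b)$ via the standard excursion representation (citing Norris), evaluates the same expectation as $e_{b,c}/e_{c,b}$ by the geometric decomposition $\P(N=n)=e_{b,c}(1-e_{c,b})^{n-1}e_{c,b}$, and obtains the Laplacian values by one-step conditioning exactly as you describe.
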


\begin{proof}
Let $N$ denote the number of visits to $c$
before the first return to $b$ when started from $b$.
It is a standard fact (see e.g.~Theorem 1.7.6 in~\cite{norris})
that $\E N=\pi(c)/\pi(b)$.
On the other hand $\P(N=n)=e_{b,c}(1-e_{c,b})^{n-1}e_{c,b}$ for $n\geq 1$,
so $\E N=e_{b,c}/e_{c,b}$, and the first claim follows.
For the remaining claims we compute $\Delta h$
by conditioning on the first step:
$\Delta h(b)=(1-e_{b,c})-h(b)=-e_{b,c}$
and $\Delta h(c) = e_{c,b}-h(c)=e_{c,b}$.
\end{proof}

\begin{proof}[Proof of Theorem \ref{stat-vec}]
  We will again apply Proposition \ref{key} with $f=h=h_{b,c}$
 (now without the restriction $p(b,a)=p(c,a)=1$).  Lemma \ref{escape} gives
  $$\Delta h(u)=\begin{cases}
  0,&u\neq b,c;\\
  -e_{b,c},&u=b;\\
  e_{c,b},&u=c,
  \end{cases}$$
and so $\sum_{s=0}^{t-1} \Delta h(x_s) = -n_t(b)e_{b,c}+n_t(c)e_{c,b}$.
In order to bound the terms in the last sum in Proposition \ref{key}
in the cases $u=b,c$
note that $|\Delta h(u)|\leq 1$ in these cases, and so, for $u=b,c$,
$$\sum_{v\in V} p(u,v)|h(u)-h(v)+\Delta h(u)|
\leq 1+\sum_{v\in V} p(u,v)|h(u)-h(v)|.$$
Hence Proposition \ref{key} gives
$$\big| n_t(b)e_{b,c}-n_t(c)e_{c,b}\big| \leq \Kstatvec.$$
Now divide through by $\pi(b)e_{b,c}$
(which equals $\pi(c)e_{c,b}$ by Lemma \ref{escape}).
\end{proof}

\begin{proof}[Proof of Theorem \ref{stat-prob}]
We will apply Proposition \ref{key} with $f=k=k_b$.
Note that $k(b)=0$, while $\E_b T^+_b=1+\sum_{v\in V} p(b,v)k(v)$.
Also we have $\E_b T^+_b=1/\pi(b)$ (see~\cite{norris}),
hence
 $$\Delta k(u)=\begin{cases}
  -1,&u\neq b;\\
  1/\pi(b)-1&u=b.
  \end{cases}$$
We bound the term for $u=b$ in Proposition \ref{key} thus:
$\sum_{v\in V} p(b,v)|k(b)-k(v)+\Delta k(b)|
\leq 1/\pi(b)+ \sum_{v\in V} p(b,v)|k(b)-k(v)-1|$.
We obtain
$$\Big|\frac{n_t(b)}{\pi(b)}-t\Big|\leq \Kstatprob,$$
and multiply by $\pi(b)/t$ to conclude.
\end{proof}

\section{Proofs for walks on $\Z^2$} \label{proofs-for-walks-on-Z2}

Our proof of Theorem \ref{hit-prob-log} is based on the two lemmas below.
For $k\geq 1$ we define the $k$th {\dof box} $B(k):=(-k,k]^2\cap\Z^2$
and the $k$th {\dof layer} $\partial B(k):=B(k)\setminus B(k-1)$.
See Figure \ref{swastika}.

\begin{lemma}\label{log-bound}
  Fix two distinct vertices $b,c$ of $\Z^2$,
and let $h=h_{b,c}$ be the hitting probability
for the simple random walk on the square lattice.
There exists $C=C(b,c)\in(0,\infty)$ such that for all positive integers $k$,
  $$\sum_{\substack{u,v\in B(k):\\ \|u-v\|_1=1}} |h(u)-h(v)| \leq C\ln k.$$
\end{lemma}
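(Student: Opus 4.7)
The plan is to exploit the explicit representation of $h$ in terms of the potential kernel $a$ of simple random walk on $\Z^2$. Recall that $a:\Z^2\to\R$ satisfies $a(0)=0$, $\Delta a(v)=-\ind[v=0]$ with $\Delta$ as in \eqref{def-of-lap}, and the classical expansion $a(v)=\tfrac{2}{\pi}\ln|v|+\kappa+O(|v|^{-2})$ as $|v|\to\infty$ (see \cite{spitzer}). Since $h$ is bounded, harmonic on $\Z^2\setminus\{b,c\}$, equals $1$ at $b$ and $0$ at $c$, and tends to $1/2$ at infinity (by recurrence and symmetry), uniqueness of bounded discrete harmonic functions with prescribed behavior at $\{b,c\}$ and infinity gives
\[
h(v)=\frac{1}{2}-\frac{a(v-b)-a(v-c)}{2\,a(b-c)}.
\]

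Writing $F(v):=a(v-b)-a(v-c)$, the sum of interest reduces, up to the constant factor $1/(2a(b-c))$, to $\sum_{u\sim v\in B(k)}|F(u)-F(v)|$. The key observation is that $F$ is a difference of two translates of $a$, so the dominant $\tfrac{2}{\pi}\ln|v|$ term cancels and $F(v)=O(|v|^{-1})$ as $|v|\to\infty$. More importantly, for any unit edge $\{u,v\}$ with $|u|$ large (say exceeding $R_0=R_0(b,c)$), a Taylor expansion of $a$ along the edge and then across the shift $b\leftrightarrow c$ yields
\[
|F(u)-F(v)|=O(|u|^{-2}),
\]
where the implicit constant depends only on $|b|$ and $|c|$.

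Assuming this estimate, the lemma follows by summing over annuli: the number of unit edges with both endpoints in $\partial B(r)$ (or in $\partial B(r)\cup\partial B(r-1)$) is $O(r)$, and each contributes $O(r^{-2})$ once $r>R_0$, giving $\sum_{r=R_0}^{k}O(r^{-1})=O(\ln k)$. Edges inside $B(R_0)$ contribute a bounded amount since $|h(u)-h(v)|\le 1$ and there are only $O(R_0^{\,2})$ of them. Absorbing this bounded contribution into the constant produces the stated bound $C\ln k$.

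The main obstacle is establishing the second-difference estimate $|F(u)-F(v)|=O(|u|^{-2})$; this is the crucial gain over the naive gradient bound $|a(u-w)-a(v-w)|=O(|u-w|^{-1})$, which would only give $\sum O(r^{-1})\cdot O(r)=O(k)$. Two routes are available. (a) Substitute Stohr's expansion directly: the edge difference of $\tfrac{2}{\pi}\ln|v-w|$ on the unit edge $\{u,v\}$ with direction $e=u-v$ equals $\tfrac{2}{\pi}(e\cdot(v-w))/|v-w|^{2}+O(|v|^{-3})$, and differencing in $w\in\{b,c\}$ introduces a factor $|b-c|/|v|^{2}$; the $O(|\cdot|^{-2})$ remainder in $a$ contributes only $O(|v|^{-3})$ on edge differencing. (b) Alternatively, since $F$ is bounded and harmonic off $\{b,c\}$ with $F=O(|v|^{-1})$ at infinity, iterate the standard discrete gradient estimate for harmonic functions on $\Z^2$ (gain of one power of $|v|^{-1}$ per derivative) to obtain the same bound. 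Either way the argument is routine but must be made quantitative in $b,c$ to produce an explicit constant $C(b,c)$.
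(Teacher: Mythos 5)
Your approach is essentially the same as the paper's: express $h$ via the potential kernel $a$ of $\Z^2$, use the asymptotic $a(v)=A+\tfrac{2}{\pi}\ln|v|+O(|v|^{-2})$ to show that the edge difference $|h(u)-h(v)|$ decays like $|v|^{-2}$, and sum over annuli to get the $\ln k$ bound. The paper writes $h(v)=C_1+e_{b,c}[a(v-c)-a(v-b)]$ and identifies the constant by checking that the difference is a bounded harmonic function; you make this more explicit by plugging in the classical values $C_1=\tfrac12$ and $e_{b,c}=1/(2a(b-c))$. Both get the same key estimate and the same annular summation.

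Two small remarks. First, with the paper's sign convention for the Laplacian \eqref{def-of-lap}, the potential kernel satisfies $\Delta a(v)=\ind[v=0]$, not $-\ind[v=0]$; this sign slip does not affect your final formula for $h$, which is correct. Second, your route (a) asserts that the $O(|v|^{-2})$ remainder in the expansion of $a$ contributes only $O(|v|^{-3})$ after taking an edge difference; this is not justified as stated, since a generic $O(|v|^{-2})$ error term need not gain a power of $|v|$ under differencing. However this over-claim is harmless: the remainder contributes $O(|v|^{-2})$ outright, which already matches the main term and gives the needed estimate $|h(u)-h(v)|=O(|v|^{-2})$. (The paper handles this in exactly the same way, simply absorbing the remainder into an $O(|v|^{-2})$ term.) Your alternative route (b), iterating discrete gradient estimates for harmonic functions, is a genuinely different way to establish the second-difference bound, and would work, but it is not the route the paper takes.
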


\begin{proof}
  Fix $b,c$ and write $C_1,C_2,\ldots$ for constants depending on $b,c$.
  We claim first that for all $v\in\Z^2$,
  \begin{equation}\label{h-formula}
     h(v)=C_1+e_{b,c}[a(v-c)-a(v-b)],
  \end{equation}
  where $a:\Z^2\to\R$ is the {\em potential kernel} of $\Z^2$.
(The function $a$ may be expressed as
$a(v):=\lim_{n\to\infty} \sum_{t=0}^n [\P(X_t=0)-\P(X_t=v)]$,
where $(X_t)$ is the simple random walk on $\Z^2$ ---
for more information see
e.g.~\cite[Ch.\ 3]{spitzer} or~\cite[Sect.\ 1.6]{lawler}.)
To check \eqref{h-formula}, we note the following facts about $a$.  Firstly,
  \begin{equation}\label{asym}
    a(v)=A+\tfrac{2}{\pi}
    \ln |v|+O(|v|^{-2})\quad \text{as }|v|\to\infty,
  \end{equation}
  where $|v|:=\|v\|_2$ and $A$ is an absolute constant
  (see~\cite[p.~39]{lawler}).
  Since $\frac{d}{dx} (A+ \tfrac{2}{\pi}\ln x) = \tfrac{2}{\pi}x^{-1}$ we deduce
  \begin{equation*}%\label{diff1}
    |a(v-c)-a(v-b)|\leq C_2|v|^{-1}.
  \end{equation*}
  Secondly, writing $\Delta$ for the Laplacian of the random walk on $\Z^2$,
  i.e.\ $\Delta f(u):=\tfrac14\sum_{v:\|u-v\|_1=1}f(v)-f(u)$, we have
  $$\Delta a(v)=\mathbf{1}[v=0].$$
  Hence, using Lemma \ref{escape}
  and the fact that $\pi\equiv 1$ is a stationary vector for the random walk,
  the function $v\mapsto h(v)-e_{b,c}[a(v-c)-a(v-b)]$ is bounded and harmonic,
  therefore constant, establishing \eqref{h-formula}.

We now claim that
for all $u,v$ with $\|u-v\|_1=1$,
\begin{equation}\label{h-diff}
|h(u)-h(v)|\leq C_3|v|^{-2}.
\end{equation}
Once this is established we obtain
\[\sum_{\substack{u,v\in B(k): \\ \|u-v\|_1=1}} |h(u)-h(v)| \leq
\sum_{j=1}^k C_4 j(C_3 j^{-2}) \leq C\ln k.\]
as required.

Finally, turning to the proof of \eqref{h-diff}, combining
\eqref{h-formula} and \eqref{asym} gives
$$h(u)-h(v)=C_5\big(\ln|u-c|-\ln|u-b|-\ln|v-c|+\ln|v-b|\big)+O(|v|^{-2}).$$
In order to bound the above expression, fix $u-v$ to be one of the 4
possible integer unit vectors, and write $v-c=z$ and $c-b=\alpha$ and
$u-v=\beta$.  For convenience identify the vector $(x,y)$ with the
complex number $x+iy$ and let $|\cdot|$ denote the modulus.  We have
\begin{align*}
&\ln|u-c|-\ln|u-b|-\ln|v-c|+\ln|v-b| \\
=&\ln\bigg|\frac{(z+\alpha)(z+\beta)}{(z+\alpha+\beta)z}\bigg|
=\ln\Big|1+\frac{\alpha}{z}+\frac{\beta}{z}
-\frac{\alpha}{z}-\frac{\beta}{z}+O(|z|^{-2})\Big| \\
=&O(|z|^{-2}) \qquad\text{ as }z\to\infty. \qedhere
\end{align*}
\end{proof}

 Fix $a,b,c\in \Z^2$, and consider the rotor walk $x_0,x_1,\dots$ started at
 $a$ with rotor mechanism \eqref{z2-mech} and rotor configuration \eqref{z2-r}
 modified so that $p(b,a)=p(c,a)=1$ as discussed in the paragraph preceding
 the statement of Theorem \ref{hit-prob-log}.
 We say that the walk {\dof enters a new layer} at time $t$
 if for some $k$ we have $x_0,\ldots,x_{t-1}\in B(k)$ but $x_t\not\in B(k)$.

\begin{lemma}\label{layers}
Under the above assumptions,
between any two times at which the rotor walk enters a new layer,
it must visit vertex $a$ at least once.
Also, between any two consecutive visits to vertex $a$,
no vertex is visited more than $4$ times.
\end{lemma}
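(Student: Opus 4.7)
The plan is to exploit the strong rigidity of the initial rotor configuration $r_0$ in \eqref{z2-r}: it is designed so that on the first visit to any previously-unvisited vertex $v\in\Z^2$ the walker is sent in the direction tangent to a counterclockwise circle about the central point $(1/2,1/2)$, and subsequent visits cycle through the remaining cardinal directions in a fixed order. A preliminary step is to tabulate, for each vertex $v$ according to where it lies on $\partial B(k)$ (the four sides and four corners are the relevant cases), the exit direction on each of its 1st, 2nd, 3rd, and 4th visits; the essential feature of this table is that every outward step from $\partial B(k)$ into layer $k+1$ occurs only on the 3rd or 4th visit to its source vertex.

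For Claim~1, I would argue by induction on the layer index. Fix the times $t_k<t_{k+1}$ at which the walker first enters layers $k$ and $k+1$. The step at time $t_{k+1}-1$ is, by the table, the 3rd or 4th visit to some $v\in\partial B(k)$, and the earlier visits to $v$ must have been supplied by inward-pointing steps from $v$'s layer-$(k-1)$ neighbors, which in turn needed several visits of their own. Chasing this requirement inward through the layers one shows that in the interval $[t_k,t_{k+1})$ the walker performs at least one complete counterclockwise traversal of each intermediate layer down to $\partial B(1)$, whose four vertices include $a$ (after accounting for the redirection rules $p(b,a)=p(c,a)=1$, which funnel the $b$- and $c$-traffic through $a$).

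For Claim~2, consider one excursion between consecutive visits to $a$ and suppose, for contradiction, that some $v$ is visited five times. Since $d(v)=4$, the rotor at $v$ completes a full cycle during the excursion, so the walker exits $v$ along each of its four incident edges. Combining this with the tangential structure of $r_0$ and the planarity of $\Z^2$, I would argue that the four sub-excursions from $v$ back to $v$ induced between successive visits must together wind around $(1/2,1/2)$; a winding-number / planarity argument then forces one of these sub-excursions to pass through $a$, contradicting the hypothesis that the excursion avoids $a$.

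The principal obstacle is making the cascade in Claim~1 rigorous: one must handle the difference between corner and side vertices at every layer, account for partial passes at layer $k-1$ that interleave with deeper excursions, and verify that the counterclockwise pass at the innermost layer actually picks up $a$ (rather than sweeping only $b$, $c$, or the other vertices of $\partial B(1)$). A clean formulation should arise from identifying an appropriate monovariant on the rotor phases along $\partial B(k)$ that is reset only by a passage through $a$. For Claim~2 the delicate point is pinning down the winding-number invariant; the tangential structure of $r_0$ around $(1/2,1/2)$ looks like exactly the right ingredient, but the topological bookkeeping will need careful attention.
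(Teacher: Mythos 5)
Your plan and the paper's proof share a starting observation---the initial rotor at a non-corner vertex of $\partial B(k)$ must be incremented a full cycle before it emits the particle outward---but after that they diverge, and the route you propose has gaps that the paper's argument neatly sidesteps.

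The paper does not attempt to show that the walk traverses each intermediate layer, and it does not use a winding-number or monovariant argument. Instead, both assertions are proved by the same short \emph{extremal} argument. For the first assertion: suppose the walk enters $\partial B(k)$ and then $\partial B(k+1)$ without visiting $a$ in between. Using exactly the rotor-phase fact you identify, the vertex $x_{\tmax-1}\in\partial B(k)$ that first emits into $\partial B(k+1)$ must have emitted at least $4$ times (for a non-corner; $3$ or $4$ with two unavailable neighbors for a corner), hence received at least $4$ times from only $3$ available neighbors, so by pigeonhole some neighbor $v\notin\{b,c\}$ sent it the particle twice, which (since $d(v)=4$) forces $v$ to emit at least $5$ times. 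Now take $u$ to be the \emph{first} vertex to emit $5$ times during the interval. One checks $u\notin\{a,b,c\}$ under the contradiction hypothesis. Then $u$ received $5$ times, so some neighbor $w\notin\{a,b,c\}$ sent to $u$ twice, forcing $w$ to emit $5$ times at a strictly earlier moment---contradicting minimality of $u$. The second assertion is, as the paper notes, the identical minimal-counterexample argument applied to the interval between consecutive visits to $a$; it does not use the initial rotor configuration at all.

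The chief gap in your proposal is the endgame. Even granting a cascade inward through the layers, there is no reason it should terminate specifically at $a$: the theorem allows $a,b,c$ to be \emph{arbitrary} vertices of $\Z^2$, so $a$ need not lie in $\partial B(1)$ and a ``complete counterclockwise traversal'' of an inner layer does not obviously pick it up. You flag this yourself, but it is not a technical loose end---it is the crux, and your framework (traversals plus a hypothetical monovariant) offers no mechanism to close it. The paper's minimal-counterexample structure avoids the issue entirely: one never needs to know \emph{where} the overworked vertex is; the contradiction comes from the infinite regress of earlier-and-earlier $5$-emitters, which can only terminate at $a$, $b$, or $c$, and those are excluded by the hypothesis. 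Similarly, for the second assertion your winding-number idea appears to be overkill: the paper shows that the bound $4$ follows purely from the fact that $d\equiv 4$ and that $b,c$ route only to $a$, with no planarity or topological input needed. I would recommend abandoning the explicit cascade and winding-number plans in favor of identifying a ``first vertex visited $5$ times'' and deriving a contradiction from it.
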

\begin{proof}
  We start by proving the first assertion.
  The reader may find it helpful to consult Figure \ref{swastika} throughout.
  Suppose for a contradiction that \nopagebreak $a\in B(k-1)$,
  and that the rotor walk enters
  both the layers $\partial B(k)$ and $\partial B(k+1)$ for the first time
  without visiting $a$ in between.
  Let $\tmin$ be the time of the last visit to $a$
  prior to entering $\partial B(k)$,
  and let $\tmax$ be the first time at which $\partial B(k+1)$ is entered.

  We claim that some vertex $v$ emitted the particle
  at least $5$ times during $[\tmin,\tmax]$.
  To prove this, note first that $x_{\tmax-1}\in \partial B(k)$,
  and consider the following two cases.
  If $x_{\tmax-1}$ is not one of the four
  ``corner vertices'' of $\partial B(k)$,
  then immediately after the particle moves
  from $x_{\tmax-1}$ to $x_\tmax\in \partial B(k+1)$,
  the rotor at $x_{\tmax-1}$ is pointing in the same direction
  as in the initial rotor configuration $r$.
  Since this rotor did not move before time $\tmin$,
  vertex $x_{\tmax-1}$ must have emitted the particle
  at least $4$ times during $[\tmin,\tmax]$.
  Therefore, $x_{\tmax-1}$ must have received the particle at least $4$ times
  from among its $4$ neighbors in $[\tmin,\tmax]$ ---
  but it has not received the particle from $x_{\tmax}$,
  therefore by the pigeonhole principle
  it received it at least twice from some other neighbor $v$.
  And $v\not \in\{b,c\}$ since $x_{\tmax-1}\neq a$.
  By considering the rotor at $v$,
  we see that this implies that $v$ emitted the particle at least $5$ times
  during $[\tmin,\tmax]$.
  On the other hand, if $x_{\tmax-1}$ is a corner vertex of $\partial B(k)$,
  then on comparing with the initial rotor configuration $r$
  we see that $x_{\tmax-1}$ has emitted (and hence received) the particle
  $3$ or $4$ times, but two of its neighbors lie in $\partial B(k+1)$,
  so it did not receive the particle from them,
  and the same argument now applies.  Thus we have proved the above claim.

  Now let $u$ be the first vertex
  to emit the particle $5$ times during $[\tmin,\tmax]$.
  Then $u\not\in\{a,b,c\}$,
  otherwise we would have a contradiction to our assumption
  that $a$ is visited only once.
  But now repeating the argument above,
  $u$ must have received the particle $5$ times,
  so it must have received it at least twice from some neighbor,
  not in $\{a,b,c\}$,
  so this neighbor must have emitted the particle $5$ times
  by some earlier time in $[\tmin,\tmax]$, a contradiction.
  Thus the first assertion is established.

  The second assertion follows by an almost identical argument:
  if some vertex is visited at least $5$ times between visits to $a$,
  then considering the first vertex to be so visited leads to a contradiction.
\end{proof}

\begin{proof}[Proof of Theorem \ref{hit-prob-log}]
We write $C_1,C_2,\ldots$ for constants which may depend on $a,b,c$.
We use the proof of Proposition \ref{key} in the case $f=h$.
As in the proof of Theorem \ref{hit-prob},
equation \eqref{almost-done} becomes
$$h(a)n-n_t(b)=h(x_t)-h(a)+
\sum_{u\in V\setminus\{b,c\}} \big[\phi(u,r_t(u))-\phi(u,r_0(u))\big],$$
where $n=n_t:=n_t(b)+n_t(c)$.
However, the term $\phi(u,r_t(u))-\phi(u,r_0(u))$ is non-zero
only for those vertices which have been visited by time $t$.
Now the first assertion of Lemma \ref{layers} implies that
at most one new layer is entered for each visit to $a$,
and thus for each visit to $\{b,c\}$.
Hence for some $C_1$, all the vertices visited by time $t$ lie in $B(n+C_1)$
(where the constant $C_1$ depends on the layer of the initial vertex $a$).

Now proceeding as in the proof of Proposition \ref{key}
and using Lemma \ref{log-bound},
\begin{align*}
  \Big|\sum_{u\in B(n+C_1)\setminus\{b,c\}}
\big[\phi(u,r_t(u))-\phi(u,r_0(u))\big]\Big|& \leq
    \tfrac12\sum_{\substack{u,v\in B(n+C_1+1): \\ \|u-v\|_1=1}}
    |h(u)-h(v)| \\
  &\leq C\ln n.
\end{align*}
Combining this with the above facts gives
\[\big|h(a)n-n_t(b)\big|\leq 1+C\ln n,\]
as required.

Finally to prove the bound $t\leq C'n^3$,
we note by the second assertion of Lemma \ref{layers}
that after $n$ visits to vertex $a$,
each of the at most $C_2 n^2$ vertices in $B(n+C_1)$
has been visited at most $4n$ times,
so the total number of time steps is at most $4 C_2 n^3$.
\end{proof}

\section{Proofs for transfinite walks} \label{proofs-for-transfinite-walks}
%%%%%%%%%%%%%%%%%%%%%%%%%%%

\begin{proof}[Proof of Lemma \ref{rec-trans}]
By irreducibility it is enough to show that
if $u$ is visited infinitely often and $p(u,v)>0$
then $v$ is visited infinitely often.
But this is immediate since $v=u^{(i)}$ for some $i$,
so the rotor at $u$ will be incremented to point to $v$ infinitely often.
\end{proof}

\begin{proof}[Proof of Lemma \ref{trans-rec-trans}]
As in the preceding proof, if $u$ is visited infinitely often and $p(u,v)>0$
then $v$ is visited infinitely often, proving the first assertion.
For the second assertion, let $M$ be one greater than the first $m$
for which the walk $x_{m\omega},x_{m\omega+1},\ldots$ is recurrent,
or $M=\omega$ if all are transient.
Then $a$ is visited infinitely often before time $M\omega$,
and we apply the first assertion.
\end{proof}

\begin{proof}[Proof of Theorem \ref{hit-prob-tf}]
We consider the quantity $\Phi$ defined in the proof of Proposition \ref{key},
with $f=h=h_{b,c}$ (as in the proof of Theorem \ref{hit-prob}).
Suppose $x_0,x_1,\ldots$ is a transient rotor walk.  We claim that
\begin{equation}\label{Phi-omega}
\Phi(x_\omega,r_\omega)-\lim_{t\to\infty}\Phi(x_t,r_t)=h(a).
\end{equation}
The claim is proved as follows. The assumption of the theorem and
the fact that the walk is transient imply that $\lim_{t\to\infty}
h(x_t)=0$.  We clearly have $\lim_{t\to\infty}
\phi(u,r_t(u))=\phi(u,r_\omega(u))$ for each $u$, and by \eqref{term-bound}
and the definition of $\Khitprob$ in Theorem \ref{hit-prob} we
have for all $u$ and $t$ that $|\phi(u,r_t(u))-\phi(u,r_0(u))|\leq F(u)$
where $\sum_{u \in V} F(u) \leq 2(\Khitprob-1)$.
Hence by the dominated convergence theorem,
$$\lim_{t\to\infty}\Phi(x_t,r_t)=0+\sum_{u\in V}
\big[\phi(u,r_\omega(u))-\phi(u,r_0(u))\big]=\Phi(x_\omega,r_\omega)-h(a).$$

We have proved claim \eqref{Phi-omega};
thus whenever we ``restart from infinity to $a$'',
the quantity $\Phi$ increases by $h(a)$.
Combining this with the argument
from the proof of Theorem~\ref{hit-prob}, we get
$$\big[n_\tau(b)+n_\tau(c)+m\big]h(a)-n_\tau(b)
=\Phi(x_\tau,r_\tau)-\Phi(x_0,r_0)$$
for $\tau=m\omega+t$,
and the right side is bounded in absolute value by $\Khitprob$
exactly as in the proof of Theorem~\ref{hit-prob}.
\end{proof}

\begin{proof}[Proof of Theorem \ref{num-vis-tf}]
We consider the quantity $\Phi$ defined in the proof of Proposition \ref{key},
with $f=g=\numvis_b$.  Note that
  $$\Delta g(u)=\begin{cases}
  0,&u\neq b;\\
  -1,&u=b.
  \end{cases}$$
Mimicking the proof of Theorem~\ref{hit-prob-tf},
we obtain
$$g(a)m-n_\tau(b)=\Phi(x_\tau,r_\tau)-\Phi(x_0,r_0),$$
and we bound the right side as in the previous proofs,
noting that when $u=b$ we have $|g(u)-g(v)+\Delta g(u)|\leq |g(u)-g(v)|+1$.
\end{proof}

Our proof of Theorem \ref{density} is based on an unpublished argument of
Oded Schramm (although we present the details in a somewhat different way).
We will need some preparation. It will be convenient to work with
$R_n:=n-I_n$, i.e.\ the number of times the transfinite rotor walk returns to
$a$ {\em without} going to infinity up to the time of the $n$th return to
$a$. We also introduce some modified Markov chains and rotor mechanisms as
follows.

Firstly, replace the vertex $a$ with two vertices $a_0$ and $a_1$. Let
$\widehat{V}=(V\setminus\{a\})\cup\{a_0,a_1\}$ denote this modified vertex
set.  Introduce a modified transition kernel $\widehat{p}$ by letting
$a_0$ inherit all the outgoing transition probabilities from $a$, and
letting $a_1$ inherit all the incoming transition probabilities to $a$
(i.e.\ let $\widehat{p}(a_0,v)=p(a,v)$ and $\widehat{p}(v,a_1)=p(v,a)$ for all
$v\in V\setminus \{a\}$); also let $\widehat{p}(a_1,a_0)=1$ and
$\widehat{p}(a_0,a_1)=0$, and let $\widehat{p}$ otherwise agree with $p$.

Secondly, for a positive integer $d$, let $B(d)$ denote the set of
vertices that can be reached in at most $d$ steps of the original
Markov chain starting from $a$, and let $\partial B(d):=B(d)\setminus
B(d-1)$.  Let $\widehat{p}^d$ be $\widehat{p}$ modified so that
$\widehat{p}^d(b,a_0)=1$ for all $b\in \partial B(d)$.  (Thus, on reaching
distance $d$ from $a$, the particle is immediately returned to $a_0$).

Fix a rotor mechanism and initial rotor configuration for the original
Markov chain, and modify them accordingly to obtain a rotor walk
associated with $\widehat{p}^d$, started at $a_0$.
Let $R_n^{d}$ be the number of times
this rotor walk hits $a_1$ before the $n$th return to $a_0$ (i.e.\
before the $(n+1)$st visit to $a_0$).  Also note that $R_n$ is the
number of times the transfinite rotor walk associated with $\widehat{p}$
and started at $a_0$ hits $a_1$ before the $n$th return to $a_0$.

\begin{lemma}
\label{conv}
For a fixed initial rotor configuration,
and any non-negative integer $n$,
we have $R_n^{d}\to R_n$ as $d\to\infty$
(i.e., $R_n^{d}= R_n$ for $d$ sufficiently large).
\end{lemma}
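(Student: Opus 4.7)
The plan is to induct on $n$ with a strengthened hypothesis: for every $n\geq 0$ and every radius $D\geq 0$, there exists a threshold $d^*(n,D)<\infty$ such that for all $d\geq d^*(n,D)$, both $R_n^d = R_n$ \emph{and} the rotor configuration at the $n$-th return to $a_0$ in the $\widehat{p}^d$-walk agrees with the rotor configuration at the corresponding transfinite return in the $\widehat{p}$-walk throughout the finite set $B(D)$. The base case $n=0$ is immediate, and setting $D=0$ at the end yields the lemma.

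For the inductive step I would analyze the $(n+1)$-st excursion of the transfinite $\widehat{p}$-walk starting at $a_0$, which is fully determined by its starting rotor configuration. The finite excursion case, in which the excursion ends by hitting $a_1$, is the easier one: the visited vertices lie in some bounded set $B(D^A)$ with $D^A<\infty$, and for $d$ exceeding both $D^A$ and $d^*(n,\max(D^A,D))$ the inductive hypothesis delivers enough starting agreement that the two walks execute identical $(n+1)$-st excursions, neither touching $\partial B(d)$. Both walks hit $a_1$, both increment $R$ by one, and both end with configurations that agree on $B(D)$.

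The transient excursion case is the main obstacle. The crucial observation is that although the $\widehat{p}$-walk visits infinitely many vertices during the excursion, transience forces each vertex to be visited only finitely often, so the last time $T_D$ during the excursion at which any vertex of $B(D)$ is visited is finite; let $D^B\leq T_D$ denote the maximum distance from $a_0$ reached during $[0,T_D]$. I would choose $d$ large enough that the starting configurations agree on $B(\max(D^B,D))$ and $d>T_D$ (which forces $d>D^B$). The two walks then evolve identically until the first time $s$ at which the $\widehat{p}^d$-walk reaches $\partial B(d)$; since any path to $\partial B(d)$ requires at least $d$ steps we have $s\geq d>T_D$, and during $(T_D,s]$ the coinciding walks lie outside $B(D)$ by the definition of $T_D$, so rotors on $B(D)$ do not change. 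At time $s$ the $\widehat{p}^d$-walker is teleported to $a_0$, completing the $(n+1)$-st return \emph{without} hitting $a_1$, exactly mirroring the transfinite restart at $a_0$ in the $\widehat{p}$-walk. The rotor configuration on $B(D)$ at this moment is $r_{T_D}|_{B(D)}$, which equals both the $\widehat{p}^d$-walk configuration after the return and the limiting $\widehat{p}$-walk configuration $r_\omega|_{B(D)}$, delivering the agreement needed to close the induction.
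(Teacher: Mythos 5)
Your overall strategy — induction on $n$, with a strengthened hypothesis that the rotor configurations at the $n$th return to $a_0$ agree on an arbitrary finite ball, then a case split on whether the $(n+1)$st excursion is recurrent or transient — is essentially the same as the paper's proof, which phrases the strengthened hypothesis as convergence of the visit-count functions $N^d_n \to N_n$ in the product topology on $\N^{\widehat V}$. Your recurrent case is fine.

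The transient case has a gap. You assert that ``the two walks then evolve identically until the first time $s$ at which the $\widehat{p}^d$-walk reaches $\partial B(d)$,'' but the only agreement you have arranged is that the rotor configurations at the start of the excursion coincide on $B(\max(D^B,D))$, which is a much smaller set than $B(d-1)$ (you chose $d>T_D\geq D^B$). Once the common trajectory first leaves $B(\max(D^B,D))$ — at some time $T'$ with $T_D < T' < s$ — the two walks are at a vertex where the starting configurations may disagree, and from that point they can take different steps. Nothing in your argument then prevents the $\widehat{p}^d$-walk from wandering back into $B(D)$ (and in particular back to $a_1$) before finally reaching $\partial B(d)$, which would spoil both the claimed value of $R^d_{n+1}$ and the claimed agreement of the post-excursion configurations on $B(D)$. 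The sentence ``during $(T_D,s]$ the coinciding walks lie outside $B(D)$ by the definition of $T_D$'' presupposes that they are still coinciding on $(T',s]$, which has not been established. The paper handles the transient case rather tersely as well — it picks a truncation radius $d$ adapted to the last visit to $F$ and then a separate radius $d'$ for configuration agreement — but the essential subtlety you need to address is exactly this: you must either show that the $\widehat p^d$-walk cannot re-enter $B(D)$ after the walks diverge, or choose the parameters so that the $\widehat p^d$-excursion is forced to terminate (at $\partial B(d)$) before the divergence can have any effect on $B(D)$.
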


\begin{proof}
For $v\in\widehat{V}$, let $N^d_n(v)$ (respectively $N_n(v)$) be the
number of visits to vertex $v$ before the $n$th return to $a_0$ for
the (transfinite) rotor walk associated with $\widehat{p}^d$ (respectively
$\widehat{p}$).  We claim that
\begin{equation}\label{prod-conv}
N^d_n \to N_n \qquad\text{as }d\to\infty,
\end{equation}
where the convergence is in the product topology on $\N^{\widehat{V}}$; in
other words, for any finite set $F\subset \widehat{V}$, if $d$ is
sufficiently large then $N^d_n(v)=N_n(v)$ for all $v\in F$.  The
required result follows immediately from this, because
$R_n^d=N_n^d(a_1)$ and $R_n=N_n(a_1)$.

We prove \eqref{prod-conv} by induction on $n$.  It holds trivially
for $n=0$ because $N^d_0$ and $N_0$ equal zero everywhere.  Assume it
holds for $n-1$.  This implies in particular that the rotor
configuration at the time of the $(n-1)$st return to $a_0$ similarly
converges as $d\to\infty$ to the corresponding rotor configuration
in the transfinite case.  Now consider the portion of the transfinite
rotor walk corresponding to $\widehat{p}$, starting just after the $(n-1)$st
return to $a_0$, up until the $n$th return to $a_0$.  Consider the
following two possibilities.  If this walk is recurrent (so that it
returns to $a_0$ via $a_1$) then it visits only finitely many
vertices, so if $d$ is sufficiently large that $N^d_{n-1}$ and $N_{n-1}$
agree on all the vertices it visits, then $N^d_{n}$ and $N_{n}$ agree
also agree on the same set of vertices, establishing \eqref{prod-conv}
in this case.  On the other hand, suppose the aforementioned walk is
transient (so that it goes to infinity before being restarted at
$a_0$). Given a finite set $F\subset \widehat{V}$, let $d$ be such that
that when this walk leaves $F$ for the last time, it has never been
outside $B(d)$.  Now let $d'$ be such that $N^{d'}_{n-1}$ and
$N_{n-1}$ agree on $B(d)$.  Then $N^d_{n}$ and $N_{n}$ will agree on
$F$.  So \eqref{prod-conv} holds in this case also,
and the induction is complete.
\end{proof}

\begin{lemma}
\label{mono}
For all positive integers $n$ and $d$ we have $R_n^{d+1}\geq R_n^{d}$.
\end{lemma}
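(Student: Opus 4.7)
The plan is to invoke the Abelian property of rotor walks to convert the sequential definition of $R_n^d$ into a parallel ``$n$ particles fired simultaneously'' description, and then compare $R_n^d$ with $R_n^{d+1}$ by firing the $\tilde p^{d+1}$-process in two stages so that the first stage exactly reproduces the $\tilde p^d$-process.

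First I would introduce, for each $e\in\{d,d+1\}$, a modified chain $\tilde p^e$ obtained from $\widehat p^e$ by turning the deterministic moves $v\to a_0$ from $v\in\{a_1\}\cup\partial B(e)$ into self-loops, so that $\{a_1\}\cup\partial B(e)$ becomes a sink set. The walk associated with $\widehat p^e$ started at $a_0$ can be re-expressed as: fire one particle from $a_0$ under $\tilde p^e$ to completion (ending at some sink), then start a new particle at $a_0$ and repeat. After $n$ such firings, the rotor state and the identity of the sink at which each of the $n$ particles was absorbed agree with those in the $n$th excursion of $\widehat p^e$. Since fresh particles at $a_0$ are indistinguishable from particles re-injected at $a_0$, the Abelian property implies that the same end state is obtained by instead placing $n$ particles at $a_0$ all at once and firing to completion in any order. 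In particular, $R_n^e$ equals the number of particles that end at $a_1$ in this ``parallel'' $\tilde p^e$-firing.

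Next I would compare $R_n^d$ and $R_n^{d+1}$ by performing the parallel $\tilde p^{d+1}$-firing (sinks $\{a_1\}\cup\partial B(d+1)$, $n$ particles at $a_0$) in two stages. In Stage~1, I refuse to fire from any vertex in $\partial B(d)$ and continue firing elsewhere until no legal move exists outside of $\partial B(d)$ and the sinks; in Stage~2, I resume ordinary firing until every particle is sunk. A key observation about Stage~1 is that every neighbor of a vertex in $B(d-1)\cup\{a_0\}$ lies in $B(d)$, so by induction every fired particle remains in $B(d)\cup\{a_1\}$ throughout Stage~1, and in particular no particle reaches $\partial B(d+1)$. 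Consequently Stage~1 executes exactly the same sequence of moves as a parallel $\tilde p^d$-firing of $n$ particles from $a_0$, and by the reformulation above it ends with $R_n^d$ particles at $a_1$ and the remaining $n-R_n^d$ held at $\partial B(d)$. Stage~2 then fires these held particles; each is eventually sunk at $a_1$ or $\partial B(d+1)$, producing some number $\alpha^+\geq 0$ of additional arrivals at $a_1$. Since the two-stage procedure is a valid Abelian firing of the $\tilde p^{d+1}$-process, the total number of particles finally at $a_1$ equals $R_n^{d+1}$, hence $R_n^{d+1}=R_n^d+\alpha^+\geq R_n^d$.

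The delicate point is the legitimacy of the two-stage firing order. Two facts are needed: the chosen order does not ``skip'' any particle (because Stage~2 ultimately fires everything not yet at a sink, so the overall procedure is a bona fide Abelian firing for $\tilde p^{d+1}$, which terminates since the walk is confined to the finite set $B(d+1)\cup\{a_1\}$); and the identification of Stage~1 with a $\tilde p^d$-firing relies on the distance-based structure of the layers $\partial B(e)$, which guarantees that particles reach $\partial B(d+1)$ only via $\partial B(d)$. Both are routine, so once the reformulation of Step~1 is in place the monotonicity follows without further computation.
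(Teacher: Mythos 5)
Your proof is correct and follows essentially the same strategy as the paper's: invoke the Abelian property to replace the sequential $n$-excursion process by a parallel $n$-particle firing, and then execute the $\widehat p^{d+1}$ (or in your notation $\tilde p^{d+1}$) firing in two stages, the first of which ``freezes'' particles at $\partial B(d)$ and thereby reproduces the $\widehat p^{d}$ process verbatim. The only cosmetic differences are that the paper routes particles leaving $\{a_1\}\cup\partial B(d+1)$ to a single absorbing vertex $s$ rather than giving them self-loops, and the paper's exposition is terser about why Stage~1 is confined to $B(d)$; your fuller explanation of that confinement (via the neighbor structure of the layers) is a welcome clarification of a point the paper leaves implicit.
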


\begin{proof}
This will follow by a special case of the Abelian property for rotor
walks on finite graphs with a sink (see e.g.\ \cite[Lemma
3.9]{hlmppw}).  First we slightly modify the mechanism yet again.
Consider the rotor mechanism and initial rotor configuration
corresponding to $\widehat{p}^{d+1}$.  Remove all the vertices in
$V\setminus B(d+1)$ (these cannot be visited by the rotor walk started
at $a_0$ anyway).  Introduce an additional {\em absorbing} vertex $s$
(called the sink), and modify the transition
probabilities so that on hitting $a_1$ or $\partial B(d+1)$, particles
are sent immediately to $s$ instead of to $a_0$.  Modify the rotor
mechanism accordingly, but do not otherwise modify the initial rotor
configuration.

We now consider the following multi-particle rotor walk (see e.g.\
\cite{hlmppw} or the discussion in the introduction for more
information).  Start with $n$ particles at $a_0$, and perform a
sequence of rotor steps.  That is, at each step, choose any non-sink
vertex which has a positive number of particles (if such exists), and
{\dof fire} the vertex; i.e.\ increment its rotor, and move one
particle in the new rotor direction.  Continue in this way until all
particles are at the sink. \cite[Lemma 3.9]{hlmppw} states that the
total number of times any given vertex fires during this procedure is
independent of our choices of which vertex to fire.

In particular, consider the firing order in which we first move one
particle repeatedly (so it performs an ordinary rotor walk) until it
reaches $s$, then move the second particle in the same way, and so on.
Thus the number of times $a_1$ fires is $R_n^{d+1}$.  Alternatively,
we may move one particle until the first time it reaches $\partial
B(d) \cup \{s\}$, then ``freeze'' it, and move the second particle
until it reaches $\partial B(d)\cup \{s\}$, and so on.  At this stage,
the number of times $a_1$ has fired is $R_n^{d}$.  Now we can continue
firing until the frozen particles reach $s$.  Comparing the two
procedures shows $R_n^{d+1}\geq R_n^{d}$.
\end{proof}

\begin{cor}
\label{abel}
For all positive integers $n$ and $d$ we have $R_n\geq R_n^{d}$.
\end{cor}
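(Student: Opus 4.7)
The plan is to obtain Corollary \ref{abel} as an immediate consequence of the two preceding lemmas, with essentially no additional work required. First I would fix $n$ and $d$ and observe that by Lemma \ref{mono}, the sequence $(R_n^{d'})_{d' \geq d}$ is monotonically non-decreasing in $d'$. Next, by Lemma \ref{conv}, there exists some $d^* \geq d$ (depending on $n$) such that $R_n^{d'} = R_n$ for all $d' \geq d^*$. Combining these two facts, $R_n = R_n^{d^*} \geq R_n^{d}$, which is the desired inequality.

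Since the argument is a one-line synthesis, there is no real obstacle to anticipate; the substantive work has already been done in establishing Lemmas \ref{conv} and \ref{mono}. The only conceptual point worth flagging is that Lemma \ref{conv} gives \emph{eventual equality} rather than merely a limit, so one does not need to invoke any preservation-of-inequalities-under-limits argument --- monotonicity plus eventual equality immediately yields the one-sided bound at every finite $d$.
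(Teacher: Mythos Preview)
Your proposal is correct and matches the paper's approach exactly: the paper's proof reads simply ``Immediate from Lemmas \ref{conv} and \ref{mono},'' and you have spelled out precisely that immediate deduction. Your observation that Lemma \ref{conv} gives eventual equality (not merely a limit) is apt and is indeed what makes the one-line synthesis work.
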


\begin{proof}
Immediate from Lemmas \ref{conv} and \ref{mono}.
\end{proof}

\begin{proof}[Proof of Theorem \ref{density}]
Since $R_n=n-I_n$, the required result is clearly equivalent to
$\liminf_{n\to\infty}R_n/n\geq \P_a(T^+_a<\infty)$.  Fix any
$\epsilon>0$.  Then there exists $d$ such that $\P_a(T^+_a<T_{\partial
B(d)})\geq \P_a(T^+_a<\infty)-\epsilon$.  Now consider the modified
rotor walk corresponding to $\widehat{p}^d$ as defined above.  Since the
set of vertices that can be reached from $a_0$ is finite (so in effect
the vertex set is finite), Theorem \ref{hit-prob} implies that
$R_n^d/n \to \P_a(T^+_a<T_{\partial B(d)})$ as $n\to\infty$.  Putting
these facts together with Corollary \ref{abel} we obtain
\[\liminf_{n\to\infty} \frac{R_n}{n}\geq \lim_{n\to\infty} \frac{R_n^d}{n}
=\P_a(T^+_a<T_{\partial B(d)}) \geq \P_a(T^+_a<\infty)-\epsilon.\qedhere\]
\end{proof}

\section{Proofs for stack walks} \label{proofs-for-stack-walks}
%%%%%%%%%%%%%%%%%%%%%

In this section we will prove Proposition \ref{seq},
and use it together with Proposition \ref{stack-key} below
to prove Theorem \ref{stack}.
Given a Markov chain and a stack mechanism, we define the discrepancy functions
$$D_n(u,v):=\#\{i\leq n: u^{(i)}=v\}-np(u,v).$$
\begin{prop}\label{stack-key}
For any Markov chain, any stack walk, any function $f$ and any $t$,
$$\sum_{s=0}^{t-1} \Delta f(x_s)
=f(x_t)-f(x_0)+\sum_{u,v\in V} D_{n_t(u)}(u,v)\big[f(u)-f(v)+\Delta f(u)\big].$$
\end{prop}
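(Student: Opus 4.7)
The plan is to mimic the potential function argument from the proof of Proposition~\ref{key}, exploiting the fact that the stack setting actually allows for a cleaner identity (not just an inequality), since we no longer need Lemma~\ref{cyclic-sum}.

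First I define, just as before,
\[
\phi(u,j) := \sum_{i=1}^{j}\bigl[f(u)-f(u^{(i)})+\Delta f(u)\bigr],
\]
now with $j$ ranging over all nonnegative integers rather than being capped at $d(u)$. The key preparatory step is to rewrite $\phi(u,j)$ in terms of the discrepancy. Grouping the sum $\sum_{i=1}^j f(u^{(i)})$ by target vertex gives $\sum_v \#\{i\le j:u^{(i)}=v\}\,f(v) = \sum_v\bigl(jp(u,v)+D_j(u,v)\bigr)f(v) = j\bigl(f(u)+\Delta f(u)\bigr)+\sum_v D_j(u,v)f(v)$, where I used the definition of $\Delta f$. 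Substituting into the definition of $\phi$ yields $\phi(u,j)=-\sum_v D_j(u,v)f(v)$. Because $\sum_v D_j(u,v)=0$ (both $\sum_v p(u,v)$ and the count $\sum_v \#\{i\le j:u^{(i)}=v\}/j$ equal $1$), I may add the $v$-independent constant $f(u)+\Delta f(u)$ inside the sum to rewrite this as
\[
\phi(u,j)=\sum_{v\in V} D_j(u,v)\bigl[f(u)-f(v)+\Delta f(u)\bigr],
\]
which is precisely the sum appearing on the right side of the claim.

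Next I set $\Phi(t) := f(x_t)+\sum_{u\in V}\phi(u,n_t(u))$. Only vertices visited by time $t$ contribute, so the sum is finite. From the stack update rule $x_{t+1}=x_t^{(n_t(x_t)+1)}$, the only summand that changes from time $t$ to time $t+1$ is the one with $u=x_t$, and the one-step change telescopes exactly as in Proposition~\ref{key}:
\[
\Phi(t+1)-\Phi(t) = f(x_{t+1})-f(x_t)+\bigl[f(x_t)-f(x_{t+1})+\Delta f(x_t)\bigr] = \Delta f(x_t).
\]
Since $n_0\equiv 0$ and $\phi(u,0)=0$, I have $\Phi(0)=f(x_0)$, so summing gives $\sum_{s=0}^{t-1}\Delta f(x_s)=\Phi(t)-\Phi(0)=f(x_t)-f(x_0)+\sum_{u\in V}\phi(u,n_t(u))$, and substituting the identity for $\phi(u,n_t(u))$ finishes the proof.

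I do not anticipate a serious obstacle. The one conceptual point worth flagging is that, in the periodic rotor case, the cyclic-sum bound (Lemma~\ref{cyclic-sum}) was invoked to bound $\phi(u,\cdot)$ after the ``full period'' identity $\phi(u,d(u))=0$; in the non-periodic stack case this is replaced by the explicit discrepancy identity $\phi(u,j)=\sum_v D_j(u,v)[f(u)-f(v)+\Delta f(u)]$, which is precisely what turns the inequality of Proposition~\ref{key} into an exact equality.
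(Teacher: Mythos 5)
Your proof is correct and follows essentially the same strategy as the paper's: define the potential $\Psi(t)=f(x_t)+\sum_u\psi(u,n_t(u))$ with $\psi(u,n)=\sum_{i=1}^n[f(u)-f(u^{(i)})+\Delta f(u)]$, telescope to get $\sum_s\Delta f(x_s)=\Psi(t)-\Psi(0)$, and rewrite $\psi(u,n)$ in terms of $D_n(u,v)$ using the Laplacian identity. The paper groups the sum directly as $\sum_v\#\{i\le n:u^{(i)}=v\}[f(u)-f(v)+\Delta f(u)]=\sum_v[D_n(u,v)+np(u,v)][\cdots]$ and drops the $p$-weighted part; your route via $\phi(u,j)=-\sum_v D_j(u,v)f(v)$ and $\sum_v D_j(u,v)=0$ is an equivalent rearrangement of the same algebra.
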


\begin{proof}
Consider the function
$$\Psi(t):=f(x_t)+\sum_{u\in V} \psi(u,n_t(u))$$
where
$$\psi(u,n):=\sum_{i=1}^n\big[f(u)-f(u^{(i)})+\Delta f(u)].$$
As in the proof of Proposition \ref{key} we have
$\sum_{s=0}^{t-1}\Delta f(x_s)=\Psi(t)-\Psi(0)$.

From the definition of $D$ we have
$$
\psi(u,n)=
\sum_{v\in V}\big[D_n(u,v)+np(u,v)\big]\big[f(u)-f(v)+\Delta f(u)\big].
$$
But by the definition of the Laplacian,
$\sum_{v\in V} p(u,v)[f(u)-f(v)+\Delta f(u)]=0$; therefore
$$
\psi(u,n)=\sum_{v\in V}D_n(u,v)\big[f(u)-f(v)+\Delta f(u)\big],
$$
and the result follows on substituting.
\end{proof}

\begin{proof}[Proof of Proposition \ref{seq}]
First note that it suffices to prove the case
in which $p_1,\ldots,p_n$ are all rational.
The irrational case then follows by a limiting argument.
Specifically, let $p_1^k,\ldots,p_n^k$ be rational
with $p_i^k\to p_i$ as $k\to\infty$,
and let $z^k_1,z^k_2,\ldots$ be a sequence satisfying \eqref{ineq}
for the $p_i^k$'s;
then by a compactness argument (since $\{1,\ldots,n\}$ is finite)
there is a subsequence $(k_j)$ and a sequence $z_1,z_2\ldots$ such that
$z^{k_j}_t\to z_t$ for each $t$,
and $(z_t)$ then satisfies \eqref{ineq} for the $p_i$'s.

Now suppose that $p_1,\ldots,p_n$ are rational,
and let $d$ be their least common denominator.
Consider the finite bipartite graph $G$
with vertex classes $L:=\{1,\ldots,d\}$ and
$R:=\bigcup_{i=1}^n R_i$ where $R_i:=\{(i,m): m\in\{1,\ldots,p_id\}\}$,
and with an edge from $t\in L$ to $(i,m)\in R$ if and only if
\begin{equation}\label{edge}
\Big\lceil \frac{m-1}{p_i}\Big\rceil \leq
t \leq \Big\lceil \frac{m}{p_i}\Big\rceil.
\end{equation}

We will prove that $G$ has a perfect matching between $L$ and $R$.
Note first that $\#L=d=\sum_i p_i d=\#R$.
We claim that any set $T\subseteq L$ has at least $p_i\#T$ neighbors in $R_i$;
from this it follows that it has at least $\#T$ neighbors in $R$,
and the existence of a perfect matching
then follows from Hall's marriage theorem.
To prove the claim, fix $i\in\{1,\ldots,n\}$ and note that
\eqref{edge} is equivalent to $p_it-p_i<m\leq p_it+1$.
Therefore in the case when $T$ is an interval $[s,t]$,
it is adjacent to all those pairs $(i,m)\in R_i$
for which $m$ is an integer in $(p_is-p_i,p_it+1]\cap [1,d]$;
this includes all integers in $(p_i s-p_i,p_it+1)$
(since $p_is-p_i\geq 0$ and $p_it+1\leq d+1$).
The latter interval has length $p_i(t-s+1)+1$,
therefore it contains at least $p_i(t-s+1)=p_i\#T$ integers as required.
Now consider the case $T=[s,t]\cup[u,v]$ (where $u>t+1$).
If the two intervals have disjoint neighborhoods in $R_i$,
the claim follows by applying the single-interval case to each and summing.
On the other hand if the neighborhoods of the two intervals intersect,
we see from \eqref{edge} that the neighborhood in $R_i$ of $T$
is the same as the neighborhood in $R_i$ of the larger set $[s,v]$,
so the claim again follows from the single-interval case.
Finally, the case when $T$ is a union of three or more intervals
is handled by applying the same reasoning to each adjacent pair,
proving the claim and hence the existence of a matching.

Fix a perfect matching of $G$, and for $t=1,\ldots,d$,
let $z_t:=i$ where $R_i$ contains the partner of $t$.
It follows from \eqref{edge} that
if $(i,m)$ and $(i,m+1)$ have respective partners $t$ and $t'$ then $t<t'$.
Therefore $t$ and $(i,m)$ are partners if and only if
$z_t$ is the $m$th occurrence of $i$ in the sequence $z$;
from  \eqref{edge} this $m$th occurrence appears between
positions $\lceil \frac{m-1}{p_i}\rceil$ and $\lceil \frac{m}{p_i}\rceil$.
Thus,
\[ \lfloor p_it\rfloor \leq \#\{s\leq t: z_s=i\} \leq \lfloor p_it\rfloor+1,\]
and it follows that \eqref{ineq} holds for all $t\leq d$.
Note also that the left side of \eqref{ineq} is zero for $t=d$,
therefore continuing the sequence $z$
so as to be periodic with period $d$ completes the proof.
\end{proof}

(For an alternative proof of Proposition~\ref{seq} that applies
also to infinite probability vectors, see~\cite{ahmp}.)

\begin{proof}[Proof of Theorem \ref{stack}]
Choosing the stack mechanism according to Proposition \ref{seq}
ensures that $|D_n(u,v)|\leq 1$ for all $u$, $v$ and $n$.
Now apply Proposition~\ref{stack-key} to $f=h$ to obtain
\begin{multline*}
\big|h(a)[n_t(b)+n_t(c)]-n_t(b)\big|
\\
\leq |h(x_t)-h(x_0)|+\sum_{\substack{u\in V\setminus\{b,c\}, \\ v\in V }}
|D_{n_t(u)}(u,v)|\cdot |h(u)-h(v)|,
\end{multline*}
and conclude by noting that $D_{n_t(u)}(u,v)=0$ unless $p(u,v)>0$.
\end{proof}

\section*{Open questions} \label{open-questions}

As the burgeoning literature on Eulerian walk and rotor-routing
attests, there are numerous interesting open problems. Here we
focus on a few that are related to rotor walk on Euclidean
lattices.

\begin{mylist}
\item Can the bound $C\log n/n$ in Theorem \ref{hit-prob-log}
    for the discrepancy in hitting probabilities for simple
    random walk on $\Z^2$ be improved to $C/n$?  Do similar
    results hold in $\Z^d$ and for other initial rotor
    configurations?
\item For simple random walk on $\Z^2$, let all rotors
    initially point East, and consider the
    transfinite rotor walk restarted at the origin after each
    escape to infinity.
    What is the asymptotic behavior of $I_n$, the
    number of escapes to infinity before the $n$th return to the
    origin?  Theorem \ref{density} implies that $I_n/n\to 0$
    as $n\to\infty$, but simulations suggest that the
    convergence is rather slow.
\item For simple random walk on $\Z^d$ with $d\geq 3$, does
    there exist an initial rotor configuration for which the rotor
    walk is recurrent?
\end{mylist}

\section*{Acknowledgments}

We thank Omer Angel, David desJardins, Lionel Levine, Russell Lyons,
Karola M\'esz\'aros, Yuval Peres, Oded Schramm and David Wilson
for valuable discussions.

\bibliography{rr-rev}

\begin{thebibliography}{10}

\bibitem{angel-holroyd}
O.~Angel and A.~E. Holroyd.
\newblock Rotor walks on general trees.
\newblock In preparation.

\bibitem{ahmp}
O.~Angel, A.~E. Holroyd, J.~Martin, and J.~Propp.
\newblock Discrete low-discrepancy sequences.
\newblock 2009, arXiv:0910.1077.
\newblock Preprint.

\bibitem{cooper-doerr-friedrich-spencer}
J.~Cooper, B.~Doerr, T.~Friedrich, and J.~Spencer.
\newblock Deterministic random walks on regular trees.
\newblock In {\em Proceedings of SODA 2008}, pages 766--772, 2008.

\bibitem{cooper-doerr-spencer-tardos-0}
J.~Cooper, B.~Doerr, J.~Spencer, and G.~Tardos.
\newblock Deterministic random walks.
\newblock In {\em Proceedings of the Workshop on Analytic Algorithmics and
  Combinatorics}, pages 185--197, 2006.

\bibitem{cooper-doerr-spencer-tardos}
J.~Cooper, B.~Doerr, J.~Spencer, and G.~Tardos.
\newblock Deterministic random walks on the integers.
\newblock {\em European J. Combin.}, 28(8):2072--2090, 2007.

\bibitem{cooper-spencer}
J.~N. Cooper and J.~Spencer.
\newblock Simulating a random walk with constant error.
\newblock {\em Combin. Probab. Comput.}, 15(6):815--822, 2006.

\bibitem{dhar}
D.~Dhar.
\newblock Self-organized critical state of sandpile automaton models.
\newblock {\em Phys. Rev. Lett.}, 64(14):1613--1616, 1990.

\bibitem{doerr-friedrich}
B.~Doerr and T.~Friedrich.
\newblock Deterministic random walks on the two-dimensional grid.
\newblock In {\em Combinatorics, Probability and Computing}, volume~18, pages
  123--144. Cambridge University Press, 2009.

\bibitem{dtw}
I.~Dumitriu, P.~Tetali, and P.~Winkler.
\newblock On playing golf with two balls.
\newblock {\em SIAM J. Discrete Math.}, 16(4):604--615 (electronic), 2003.

\bibitem{engel-1}
A.~Engel.
\newblock The probabilistic abacus.
\newblock {\em Ed. Stud. Math.}, 6(1):1--22, 1975.

\bibitem{engel-2}
A.~Engel.
\newblock Why does the probabilistic abacus work?
\newblock {\em Ed. Stud. Math.}, 7(1--2):59--69, 1976.

\bibitem{hlmppw}
A.~E. Holroyd, L.~Levine, K.~M\'esz\'aros, Y.~Peres, J.~Propp, and D.~B.
  Wilson.
\newblock Chip-firing and rotor-routing on directed graphs.
\newblock In V.~Sidoravicius and M.~E. Vares, editors, {\em In and Out of
  Equilibrium 2}, volume~60 of {\em Progress in Probability}, pages 331--364.
  Birkh\"auser, 2008.

\bibitem{kleber}
M.~Kleber.
\newblock Goldbug variations.
\newblock {\em Math. Intelligencer}, 27(1):55--63, 2005.

\bibitem{landau-levine}
I.~Landau and L.~Levine.
\newblock The rotor-router model on regular trees.
\newblock {\em J. Combin. Theory Ser. A}, 116(2):421--433, 2009.

\bibitem{lawler}
G.~F. Lawler.
\newblock {\em Intersections of random walks}.
\newblock Probability and its Applications. Birkh\"auser Boston Inc., Boston,
  MA, 1991.

\bibitem{levine}
L.~Levine.
\newblock The rotor-router model.
\newblock arXiv:math/0409407.

\bibitem{levine-peres-3}
L.~Levine and Y.~Peres.
\newblock Scaling limits for internal aggregation models with multiple sources.
\newblock {\em J. d'Analyse Math.}, to appear, arXiv:0712.3378.

\bibitem{levine-peres}
L.~Levine and Y.~Peres.
\newblock Spherical asymptotics for the rotor-router model in {$Z^d$}.
\newblock {\em Indiana Univ. Math. J.}, 57(1):431--449, 2008.

\bibitem{levine-peres-2}
L.~Levine and Y.~Peres.
\newblock Strong spherical asymptotics for rotor-router aggregation and the
  divisible sandpile.
\newblock {\em Potential Analysis}, 30(1):1--27, 2009, arXiv:0704.0688.

\bibitem{norris}
J.~R. Norris.
\newblock {\em Markov chains}, volume~2 of {\em Cambridge Series in Statistical
  and Probabilistic Mathematics}.
\newblock Cambridge University Press, Cambridge, 1998.
\newblock Reprint of 1997 original.

\bibitem{pddk}
V.~B. Priezzhev, D.~Dhar, A.~Dhar, and S.~Krishnamurthy.
\newblock Eulerian walkers as a model of self-organized criticality.
\newblock {\em Phys. Rev. Lett.}, 77:5079--5082, 1996.

\bibitem{propp}
J.~Propp.
\newblock Rotor walk and {M}arkov chains: Refinements and examples.
\newblock 2009.
\newblock In preparation.

\bibitem{spitzer}
F.~Spitzer.
\newblock {\em Principles of random walk}.
\newblock Springer-Verlag, New York, second edition, 1976.
\newblock Graduate Texts in Mathematics, Vol. 34.

\bibitem{wilson}
D.~Wilson.
\newblock Generating spanning trees more quickly than the cover time.
\newblock In {\em Proceedings of the Twenty-Eighth ACM Symposium on the Theory
  of Computing}, pages 296--303, 1996.

\end{thebibliography}

\noindent
{\sc Alexander E. Holroyd:}
{\tt holroyd at math dot ubc dot ca}\\
Microsoft Research, 1 Microsoft Way, Redmond WA, USA; and
\\ University of British Columbia, 121-1984 Mathematics Rd.,
Vancouver, BC, Canada.

\vspace{3mm}
\noindent
{\sc James Propp:}
{\tt jpropp at cs dot uml dot edu}\\
University of Massachusetts Lowell,
1 University Ave., Olney Hall 428, Lowell, MA,  USA.

\end{document}